\newtheorem{thm}{Theorem}[section]
\newtheorem{cor}[thm]{Corollary}
\newtheorem{lem}[thm]{Lemma}
\newtheorem{prop}[thm]{Proposition}
\newtheorem{defn}[thm]{Definition}
\newtheorem{rem}[thm]{Remark}
\newtheorem{qu}[thm]{Question}
\numberwithin{equation}{section}
\begin{document}

\oddsidemargin 0mm
\evensidemargin 0mm

\thispagestyle{plain}

\vspace{5cc}
\begin{center}

{\large\bf Groups all of whose undirected Cayley graphs are determined by their spectra}
\rule{0mm}{6mm}\renewcommand{\thefootnote}{}
\footnotetext{{\scriptsize 2010 Mathematics Subject Classification. 05C50, 15A18, 05C25.}

{\rule{2.4mm}{0mm}Keywords and Phrases. Cayley graph; Spectrum of a graph; Graphs determined by spectrum}}

\vspace{1cc}
{\large\it Alireza Abdollahi, Shahrooz Janbaz and Mojtaba Jazaeri}

\vspace{1cc}
\parbox{24cc}{{\small

Let $G$ be a finite group, and $S$ be a subset of $G\setminus\{1\}$ such that $S=S^{-1}$. Suppose that $Cay(G,S)$ is the Cayley graph on $G$ with respect to the set $S$ which is the graph whose vertex set is $G$ and two vertices $a,b\in G$ are adjacent whenever $ab^{-1}\in S$. The adjacency spectrum $Spec(\Gamma)$ of a graph $\Gamma$ is the multiset of eigenvalues of its adjacency matrix. A graph $\Gamma$ is called ``determined by its spectrum" (or for short DS) whenever if a graph $\Gamma'$ has the same spectrum as $\Gamma$, then $\Gamma \cong \Gamma'$. We say that the group $G$ is DS (Cay-DS, respectively) whenever if $\Gamma$ is a Cayley graph over $G$ and $Spec(\Gamma)=Spec(\Gamma')$ for some graph (Cayley graph, respectively) $\Gamma'$, then $\Gamma \cong \Gamma'$. In this paper, we study finite DS groups and finite Cay-DS groups. In particular we prove that all finite DS groups are solvable and all  Sylow $p$-subgroups of a finite DS group is 
cyclic for all $p\geq 5$.  We also give several infinite families of non Cay-DS solvable groups. In particular we prove that there exist two cospectral non-isomorphic $6$-regular Cayley graphs on the dihedral group of order $2p$  for any prime $p\geq 13$. }}
\end{center}

\vspace{1cc}
\vspace{1cc}
\begin{center}
\section {\bf Introduction and Results}
\end{center}

Let $G$ be a finite group and $S$ be a subset of $G\setminus \{1\}$ such that $S=S^{-1}$. The Cayley graph $Cay(G,S)$ is the graph whose vertex set is $G$ and two vertices $a,b\in G$ are adjacent whenever $ab^{-1}\in S$. The adjacency spectrum $Spec(\Gamma)$ of a graph $\Gamma$ is the multiset of eigenvalues of its adjacency matrix. Two graphs $\Gamma$ and $\Gamma'$ are called cospectral if $Spec(\Gamma)=Spec(\Gamma')$ and $\Gamma'$ is called to be a cospectral mate for $\Gamma$. A graph $\Gamma$ is called determined by the spectrum, if $\Gamma$ is isomorphic to all its cospectral mates. Also, we say that the group $G$ is DS (Cay-DS, respectively) whenever if $\Gamma$ is a Cayley graph over $G$ and $Spec(\Gamma)=Spec(\Gamma')$ for some graph (Cayley graph, respectively) $\Gamma'$, then $\Gamma \cong \Gamma'$. In general a Cay-DS graph (which is a Cayley graph) is not necessarily a DS graph (see Proposition \ref{prop29}, below). The characterization of DS graphs is very difficult and goes back to about half of a century and it is originated in chemistry \cite{GHZ,VH}. For a survey of results on DS graphs one may see \cite{VH} and references there in. It is well-known that all regular graphs with at most $9$ vertices are DS \cite[page 398]{CD}. So all groups of order at most $9$ are DS. It is proved that no two non-isomorphic Cayley graphs on the same group of prime order are cospectral (see \cite{DJO}). By our terminology, this means that the cyclic group  $C_p$ of prime order $p$ is Cay-DS. In contrast, it is shown that $D_{2p}$ is not Cay-DS for any prime $p\geq 129$ (see \cite{Ba}).    It is  shown in \cite{LSV} that the projective special linear group $PSL_d(\mathbb{F}_q)$ of dimension $d$ over the field $\mathbb{F}_q$ with $q$ elements is not Cay-DS for certain values of $d$ and $q$.
In this paper we study the following questions.
\begin{qu}\label{question1} ~~\\
{\rm (1)} \; Which finite groups are Cay-DS? \\
{\rm (2)}  \;  Which finite groups are DS?
\end{qu}

Throughout the paper we use the following notations: $C_n$ denotes the cyclic group of order $n$; the dihedral group of order $2n$ is denoted by $D_{2n}$. 

Our main results are:

\begin{thm}\label{corp}
\begin{enumerate}
\item Every Sylow $p$-subgroup of any finite Cay-DS group is cyclic whenever $p>5$. Every Sylow $5$-subgroup of any finite DS group is cyclic.
\item Every  Sylow $2$-group of a Cay-DS group is of order at most $16$. 
\item Every Sylow $3$-subgroup of any finite Cay-DS group is either cyclic or is isomorphic to $C_3 \times C_3$.
\end{enumerate}
\end{thm}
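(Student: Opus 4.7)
The proof plan rests on a \emph{lifting principle}: if $H \leq G$ and $S \subseteq H \setminus \{1\}$ is inverse-closed, then $\mathrm{Cay}(G,S)$ is the disjoint union of $[G{:}H]$ copies of $\mathrm{Cay}(H,S)$, one on each left coset. Spectra of disjoint unions add with multiplicity and connected components determine a graph up to isomorphism, so any pair of cospectral non-isomorphic Cayley graphs on $H$ (resp.\ any non-Cayley cospectral mate of a Cayley graph on $H$) lifts to the analogous configuration on $G$ via the same connection sets, completed with further coset-copies when needed. Consequently every subgroup, and in particular every Sylow $p$-subgroup, of a Cay-DS group is itself Cay-DS, and likewise for DS. This reduces each statement to verifying that the forbidden Sylow structures fail the appropriate property.

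For the odd-prime statements (parts (1) and (3)), I would exploit the fact that for $p$ odd, any non-cyclic $p$-group contains $C_p \times C_p$. For part (1) with $p \geq 7$, it therefore suffices to produce two inverse-closed subsets $S_1, S_2 \subseteq (\mathbb{Z}_p)^2 \setminus \{0\}$ with the same multiset of character sums $\bigl\{\sum_{s \in S_i} \chi(s) : \chi \in \widehat{(\mathbb{Z}_p)^2}\bigr\}$ yet not related by an automorphism of $(\mathbb{Z}_p)^2$; the abelian-Cayley eigenvalue formula then yields cospectral non-isomorphic Cayley graphs on $C_p \times C_p$. For $p=5$ in the DS statement, a weaker task suffices: pair a Cayley graph on $(\mathbb{Z}_5)^2$ with a non-Cayley cospectral mate, which one can often manufacture via Godsil–McKay switching. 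For part (3), if $P$ is a non-cyclic $3$-group of order $\geq 27$, then $P$ contains $C_3 \times C_3$, which lies in a subgroup of $P$ of order $27$ that properly contains $C_3 \times C_3$ and is therefore non-cyclic. So it reduces to checking each of the four non-cyclic groups of order $27$ admits a pair of cospectral non-isomorphic Cayley graphs.

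For part (2), every 2-group of order $\geq 32$ has a subgroup of order $32$, so it suffices to prove each group of order $32$ fails to be Cay-DS. Rather than handle all $51$ isomorphism types independently, the natural strategy is to exploit subgroup structure: a group of order $32$ is either cyclic, in which case one invokes known constructions of cospectral non-isomorphic circulants on $\mathbb{Z}_{32}$, or contains a proper elementary-abelian-rich subgroup such as $C_2^4$, $C_4 \times C_2^2$, or $C_8 \times C_2$ for which character-theoretic constructions deliver the required cospectral pair, lifted back to the whole group by the principle of the first paragraph. The formal reduction to Sylow subgroups is the easy step; the substantive difficulty, and where the main obstacle lies, is the explicit construction of cospectral non-isomorphic Cayley graph pairs on the critical $p$-groups — particularly finding subset pairs with coincident character-sum multisets that are not $\mathrm{Aut}(G)$-equivalent, and handling the non-abelian groups of orders $27$ and $32$ where the character theory is no longer commutative and constructions become ad hoc.
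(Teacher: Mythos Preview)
Your overall framework---the lifting principle (the paper's Proposition~\ref{Non-DS}) and the reduction of parts (1) and (3) to showing $C_p\times C_p$ is not Cay-DS for $p\ge 7$ (respectively not DS for $p=5$) and that the non-cyclic groups of order $27$ are not Cay-DS---matches the paper. For part (1), however, the paper does not build explicit character-sum pairs for each prime. Instead it uses a uniform counting argument: for any subset $A$ of the $p+1$ lines through the origin with $|A|=(p+1)/2$, the Cayley graph on $C_p\times C_p$ with connection set $\bigcup_{\ell\in A}\ell\setminus\{0\}$ is a conference graph, so all such graphs are cospectral. There are $\binom{p+1}{(p+1)/2}$ choices of $A$; since $C_p\times C_p$ is CI, if all these graphs were isomorphic they would lie in a single $PGL(2,p)$-orbit of line-sets, forcing $\binom{p+1}{(p+1)/2}\mid |PGL(2,p)|$, which fails for $p\ge 7$. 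For $p=5$ the paper simply cites the known enumeration of the fifteen $(25,12,5,6)$ strongly regular graphs. This is cleaner than an ad hoc construction for each $p$.

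Your plan for part (2) has a genuine gap. The dichotomy ``a group of order $32$ is either cyclic or contains $C_2^4$, $C_4\times C_2^2$, or $C_8\times C_2$'' is false: the generalized quaternion group $Q_{32}$ has a unique involution, so it contains no $C_2\times C_2$ and hence none of your listed abelian subgroups, yet it is not cyclic. More seriously, the paper shows that exactly two groups of order $16$---namely $C_2\times Q_8$ and the central product $G_{13}^{16}\cong (C_4\times C_2){:}C_2$---\emph{are} Cay-DS. Hence the subgroup-lifting strategy cannot dispose of any group of order $32$ all of whose maximal subgroups lie among these two. There is precisely one such group, $G_{50}^{32}\cong (C_2\times Q_8){:}C_2$, and the paper handles it by exhibiting an explicit cospectral non-isomorphic pair (one Cayley graph on $G_{50}^{32}$, the other on a different group of order $32$). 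Your outline neither identifies this obstruction nor provides a mechanism to overcome it.
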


\begin{thm}\label{dihedral}
Let $p$ be a prime number and $D_{2p}$ denotes the Dihedral group with $2p$ elements. Then the group $D_{2p}$ is Cay-DS if and only if $p\in \{2,3,5,7,11\}$.
In particular there exist two cospectral non-isomorphic $6$-regular Cayley graphs on the dihedral group of order $2p$  for any prime $p\geq 13$.
\end{thm}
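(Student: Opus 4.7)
The statement splits into two directions. For $p\in\{2,3\}$, $|D_{2p}|\leq 6\leq 9$, and the Introduction recalls that every regular graph on at most $9$ vertices is DS; hence $D_{2p}$ is Cay-DS. For $p\in\{5,7,11\}$, I would perform a finite enumeration: $\mathrm{Aut}(D_{2p})\cong\mathbb{Z}_p\rtimes\mathbb{Z}_{p-1}$ acts on the symmetric inverse-closed subsets of $D_{2p}\setminus\{1\}$, leaving only finitely many orbits to consider (a manageable number since $|G|\le 22$). For each pair of orbit representatives one computes the Cayley spectra and checks directly that cospectral orbits must coincide.

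For the negative direction, the key tool is the representation-theoretic spectral formula for dihedral Cayley graphs. Writing $D_{2p}=\langle a,b\mid a^p=b^2=1,\;bab=a^{-1}\rangle$ and any symmetric connection set $S\subseteq D_{2p}\setminus\{1\}$ uniquely as $S=S_1\cup bT$ with $S_1=-S_1\subseteq\mathbb{Z}_p\setminus\{0\}$ and $T\subseteq\mathbb{Z}_p$, and using the two $1$-dimensional characters and the $(p-1)/2$ two-dimensional irreducible representations $\rho_k(a)=\mathrm{diag}(\omega^k,\omega^{-k})$, $\rho_k(b)=\bigl(\begin{smallmatrix}0&1\\1&0\end{smallmatrix}\bigr)$ with $\omega=e^{2\pi i/p}$, the spectrum of $\mathrm{Cay}(D_{2p},S)$ is
\[
|S_1|+|T|,\quad |S_1|-|T|,\quad\text{and}\quad \alpha_k\pm|\beta_k|\;(\text{each with multiplicity }2),\quad k=1,\dots,(p-1)/2,
\]
where $\alpha_k=\sum_{j\in S_1}\omega^{jk}$ and $\beta_k=\sum_{j\in T}\omega^{-jk}$. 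Cospectrality of two graphs sharing the same $S_1$ thus reduces to agreement of the multiset $\{|\beta_k|\}_{k=1}^{(p-1)/2}$, which after squaring is a condition on the difference multiset $m_T(d)=\#\{(s,t)\in T^2:s-t=d\}$ via its Fourier transform.

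For each prime $p\geq 13$ the plan is to exhibit an explicit pair of $6$-subsets $T,T'\subseteq\mathbb{Z}_p$ (alternatively a fixed small $S_1$ together with $4$-subsets when $|S_1|=2$) such that $\{|\beta_k(T)|\}_{k}$ and $\{|\beta_k(T')|\}_{k}$ coincide as multisets, yet $T$ and $T'$ lie in distinct orbits of the affine action $T\mapsto\lambda T+c$ (with $\lambda\in\mathbb{Z}_p^*$, $c\in\mathbb{Z}_p$) of $\mathrm{Aut}(D_{2p})$ on subsets of $\mathbb{Z}_p$. Natural candidates are $6$-subsets whose difference functions are linked by a Fourier-absolute-value-preserving transformation that is not itself affine, exploiting the larger multiplicative group $\mathbb{Z}_p^*$ available when $p\geq 13$.

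The two main obstacles will be: (i) producing a construction uniform in $p\geq 13$, in particular handling the boundary cases $p=13,17,19$ where few ``multiplicative tricks'' are available in $\mathbb{Z}_p^*$; and (ii) promoting non-Cayley-isomorphism to honest non-isomorphism of graphs. For (ii) the cleanest route is a CI-type argument showing that any graph isomorphism between the two small-valency Cayley graphs must be induced by an element of $\mathrm{Aut}(D_{2p})$, so that non-isomorphism reduces to the easily verifiable condition $T'\notin\{\lambda T+c:\lambda\in\mathbb{Z}_p^*,\,c\in\mathbb{Z}_p\}$; failing this, one can rely on a non-spectral invariant such as a suitable neighborhood-incidence count to separate the two graphs.
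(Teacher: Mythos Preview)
Your plan matches the paper's approach closely: small primes are handled by direct enumeration (the paper too relies on computer verification for $p=7,11$), and for $p\geq 13$ the same spectral reduction via the two-dimensional representations to the moduli $|\beta_k|$ (equivalently, to the difference multiset of $T$) is used, together with Babai's CI-theorem for $D_{2p}$ to reduce non-isomorphism to affine non-equivalence of $T$ and $T'$ in $\mathbb{Z}_p$. The paper disposes of your obstacle (i) in one stroke by taking $S_1=\emptyset$ and the \emph{fixed} exponent sets $\{0,1,2,6,8,11\}$ and $\{0,2,4,5,10,11\}$, which have identical difference multisets already in $\mathbb{Z}$ (hence in every $\mathbb{Z}_p$ with $p\geq 13$, giving pointwise equality $|\beta_k(T)|=|\beta_k(T')|$ with no Fourier permutation needed); affine non-equivalence is then a short case analysis on linear equations over $\mathbb{Z}_p$, with only the primes $13,17,19,23,43$ requiring a separate check.
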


 As we mentioned above it is shown in \cite{Ba} that $D_{2p}$ is not Cay-DS for any prime $p\geq 129$, so what Theorem \ref{dihedral} may have as a new result is about the primes less than $128$ and also the degree $6$ of the regularity  of non-DS Cayley graphs on  $D_{2p}$.

\begin{thm}\label{thm:minimal}
Every finite DS group is solvable.
\end{thm}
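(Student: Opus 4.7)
The plan is to argue by contrapositive, showing that any finite non-solvable group $G$ is not DS. The main tool is an inheritance lemma: if $H\le G$ and $S\subseteq H\setminus\{1\}$ is symmetric, then $Cay(G,S)$ decomposes as the disjoint union of $[G:H]$ isomorphic copies of $Cay(H,S)$, since $S\subseteq H$ forces every $S$-edge to lie inside a single left coset of $H$. Hence any graph $\Delta$ cospectral and non-isomorphic to $Cay(H,S)$ produces $[G:H]\cdot \Delta$, which is cospectral and (by uniqueness of the connected-component decomposition of graphs) non-isomorphic to $Cay(G,S)$. In short, DS is inherited by subgroups.

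To upgrade to arbitrary sections, I would use a blow-up construction: for $N\trianglelefteq G$ and symmetric $T\subseteq (G/N)\setminus\{1\}$, pulling back to $S=\pi^{-1}(T)\subseteq G$ realizes $Cay(G,S)$ as the lexicographic product $Cay(G/N,T)[\overline{K_{|N|}}]$. The spectrum of this product is determined by that of the base graph, so cospectrality on $G/N$ lifts; and choosing $T$ so that $Cay(G/N,T)$ is twin-free (which can always be arranged) makes the base graph recoverable by collapsing vertices with identical open neighborhoods, so non-isomorphism lifts as well. Combining subgroup and quotient inheritance makes DS a section-closed property.

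Suppose, for contradiction, that $G$ is a non-solvable finite DS group. By Jordan--H\"older, $G$ has a non-abelian finite simple section $S$, and $S$ must itself be DS. Applying Theorem~\ref{corp} to $S$ forces its Sylow $p$-subgroups to be cyclic for all $p\ge 5$, its Sylow $2$-subgroup to have order at most $16$, and its Sylow $3$-subgroup to have order dividing $9$. Classical results classifying finite simple groups with small or restricted Sylow $2$-structure reduce $S$ to a short, explicit list of small simple candidates, such as $A_5$, $A_6$, $PSL_2(7)$, $PSL_2(8)$, $PSL_2(11)$, $PSL_2(13)$, and $PSL_2(17)$.

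The main obstacle is the final case analysis: for each remaining simple candidate, exhibit an explicit pair of non-isomorphic cospectral Cayley graphs, thereby violating DS of $S$ and hence of $G$. Such pairs can be constructed via Godsil--McKay switching on a carefully chosen connection set, by invoking the $PSL$-constructions of \cite{LSV}, or by direct computation, since only finitely many small simple groups remain after the Sylow restrictions are imposed.
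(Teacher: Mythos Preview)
Your overall architecture---reduce via section-closure to a non-abelian simple group, then eliminate all simple candidates---matches the paper's. The subgroup inheritance is exactly Proposition~\ref{Non-DS}, and your lexicographic blow-up is essentially the construction behind Theorem~\ref{Qu} (though your twin-free manoeuvre would need more justification: you do not get to \emph{choose} $T$, you are handed a specific non-DS Cayley graph on $G/N$ and must lift its cospectral mate).

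The real gap is the next step. The Sylow constraints from Theorem~\ref{corp} do \emph{not} cut the list of simple candidates down to finitely many. For every prime $q\equiv 3$ or $5\pmod 8$, the group $\mathrm{PSL}_2(q)$ has Sylow $2$-subgroup of order~$4$ and every odd Sylow subgroup cyclic, so it satisfies all three conditions of Theorem~\ref{corp}. There are infinitely many such primes, hence infinitely many simple groups survive your filter; the phrase ``a short, explicit list'' is simply false, and the proposed case-by-case finish cannot be carried out.

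The paper closes this gap with two ingredients you omit. First, instead of working with an arbitrary simple section, it passes (via the result of Barry--Ward) to a \emph{minimal} simple subgroup and invokes Thompson's classification, which yields five concrete families rather than all simple groups. Second, and crucially, the infinite family $\mathrm{PSL}_2(p)$ is disposed of not by Sylow arithmetic but by the dihedral result Theorem~\ref{dihedral} (and Corollary~\ref{cordihed}): $\mathrm{PSL}_2(p)$ contains $D_{p-1}$ and $D_{p+1}$, and for $p$ outside a tiny set these dihedral groups are already non-Cay-DS. Without Theorem~\ref{dihedral} (or an equivalent uniform source of cospectral pairs over an infinite family), your argument cannot terminate.
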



\section {\bf Preliminaries}

In this section we state some facts and results which are needed in the next sections.


Proposition 5.1 of \cite{LSV} shows that every subgroup of a finite Cay-DS group is also Cay-DS. The proof of the following proposition is the same as 
Proposition 5.1 of \cite{LSV}, we give it here for the reader's convenience. 
\begin{prop}\label{Non-DS}
Let $G$ be a finite group and $H$ be a subgroup of $G$. If $H$ is not DS (not Cay-DS, respectively), then $G$ is not DS (not Cay-DS, respectively).
In particular, the classes of finite DS groups and Cay-DS groups are closed under taking subgroups.
\end{prop}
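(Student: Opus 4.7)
The plan is to lift a counterexample from $H$ to $G$ by taking disjoint copies along cosets. Suppose first that $H$ is not Cay-DS, so there exist symmetric identity-free subsets $S,S'\subseteq H$ for which $Cay(H,S)$ and $Cay(H,S')$ are cospectral but non-isomorphic. View $S$ as a subset of $G$. The adjacency rule $ab^{-1}\in S\subseteq H$ forces $a$ and $b$ to lie in the same right coset of $H$ in $G$; conversely, for any fixed coset representative $g$, the bijection $hg\mapsto h$ identifies the induced subgraph of $Cay(G,S)$ on $Hg$ with $Cay(H,S)$. Hence $Cay(G,S)$ is isomorphic to the disjoint union of $[G:H]$ copies of $Cay(H,S)$, and the analogous statement holds for $S'$.

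Because the spectrum of a disjoint union of graphs is the multiset union of the spectra of the summands, the cospectrality of the two graphs on $H$ lifts immediately to cospectrality of $Cay(G,S)$ and $Cay(G,S')$. For the non-isomorphism, I would invoke the standard fact that a graph is determined up to isomorphism by the multiset of its connected components: if $Cay(G,S)\cong Cay(G,S')$, then $[G:H]$ copies of the component-multiset of $Cay(H,S)$ agree with $[G:H]$ copies of that of $Cay(H,S')$, and cancelling in the multiset yields $Cay(H,S)\cong Cay(H,S')$, contradicting the choice of $S$ and $S'$. This disposes of the Cay-DS assertion.

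The DS case runs along exactly the same lines, with an arbitrary (not necessarily Cayley) cospectral mate $\Gamma'$ of $Cay(H,S)$ replacing $Cay(H,S')$. The graph consisting of $[G:H]$ disjoint copies of $\Gamma'$ is cospectral with $Cay(G,S)\cong [G:H]\cdot Cay(H,S)$ by the disjoint-union computation, and is non-isomorphic to it by the same component-cancellation argument. The closure statements are then the contrapositives of the two implications. There is no deep obstacle in the argument; the only step worth a moment of care is the multiset cancellation of connected components, which is an elementary combinatorial fact.
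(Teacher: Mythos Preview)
Your DS argument is fine and essentially matches the paper's, but the Cay-DS case has a genuine gap in the very first line. You write ``there exist symmetric identity-free subsets $S,S'\subseteq H$ for which $Cay(H,S)$ and $Cay(H,S')$ are cospectral but non-isomorphic.'' The definition of Cay-DS used in the paper does \emph{not} guarantee this: $H$ failing to be Cay-DS only provides a Cayley graph $Cay(H,S)$ and a cospectral non-isomorphic Cayley graph $Cay(K,S')$ over some group $K$, where $K$ need not be $H$ (only $|K|=|H|$ is forced by cospectrality). If $K\not\cong H$ then $S'$ cannot be viewed as a subset of $G$, and your lift $Cay(G,S')$ is undefined.

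The paper handles this by keeping the mate over $K$ and realizing the disjoint union of $[G:H]$ copies of $Cay(K,S')$ as a single Cayley graph, namely $Cay\!\big(K\times C_{[G:H]},\,S'\times\{1\}\big)$. This is the missing idea: one needs an explicit Cayley-graph model for ``$[G:H]$ copies of $Cay(K,S')$'' that does not depend on $K$ sitting inside $G$. Once you insert this step, the rest of your argument (spectrum of a disjoint union, multiset cancellation of components) goes through exactly as you wrote it.
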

\begin{proof}
By hypothesis there exists a Cayley graph $Cay(H,S)$ which is not DS (Cay-DS, respectively). 
Therefore there exists a graph $\Gamma$ (a Cayley graph $Cay(K,S')$ for some group $K$ and a symmetric subset $S'$ of $K$, repectively) which is  cospectral  and non-isomorphic to $Cay(H,S)$.
On the other hand, $Cay(G,S)$ has $|G:H|$ connected components isomorphic to $Cay(H,S)$.  Thus the disjoint union of $|G:H|$ copies of $\Gamma$ ($Cay(K,S')$ respectively) is a non-isomorphic cospectral mate of $Cay(G,S)$ and so $G$ is not DS. Note that $Cay(K\times C_{|G:H|},S' \times 1)$ is isomorphic to the disjoint union of $|G:H|$ copies of $Cay(K,S')$, where $S'\times 1=\{(x,1) \;|\; x\in S'\}$. Therefore, in the case $G$ is not Cay-DS, $H$ is not also Cay-DS. This completes the proof. 
\end{proof}

\begin{rem}
{\rm If $\Gamma$ and $\Gamma'$ are two non-isomorphic copsectral regular graphs, then their complements $\overline{\Gamma}$ and $\overline{\Gamma'}$ are also non-isomorphic copsectral. The complement of a Cayley graph $Cay(G,S)$ is  $Cay(G,\big(G\setminus S\big) \setminus \{1\})$. If $G\not=\langle S\rangle$ or equivalently $Cay(G,S)$ is disconnected, then $G= \langle  \big(G\setminus S\big) \setminus \{1\}\rangle$ or equivalently $Cay(G,\big(G\setminus S\big) \setminus \{1\})$ is connected. These observations show that a group is DS (Cay-DS, respectively) if and only if all of its {\em connected} Cayley graphs are DS (Cay-DS, respectively).} 
\end{rem}

We need the following  lemma to show that the property of being Cay-DS  is closed under taking quotient group.

\begin{lem}\label{jksimilar}
Let $A$ and $B$ be two $n\times n$ symmetric matrices and $J_k$ denotes the $k\times k$ matrix of all one. If for a permutation matrix $P_\sigma$ we have $P_{\sigma}\left(J_k\otimes A\right) P_{\sigma}^{-1}=J_k\otimes B$, then there is a permutation matrix $P_{\overline{\sigma}}$ such that $P_{\overline{\sigma}}AP_{\overline{\sigma}}^{-1}=B$. 
\end{lem}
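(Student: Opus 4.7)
The plan is to translate the matrix identity entry-by-entry and then descend the permutation $\sigma$ of $[k]\times[n]$ encoded by $P_\sigma$ to a permutation $\overline{\sigma}$ of $[n]$ via a short counting argument. Identifying rows and columns of $J_k\otimes A$ with pairs $(i,p)\in[k]\times[n]$ so that $(J_k\otimes A)_{(i,p),(j,q)} = A_{pq}$, and writing $\sigma_2(i,p)$ for the second coordinate of $\sigma(i,p)$, the hypothesis becomes entrywise
\[
A_{pq} \;=\; B_{\sigma_2(i,p),\,\sigma_2(j,q)} \qquad \text{for all } i,j \in [k],\ p,q\in[n]. \qquad (*)
\]
In particular, the right-hand side is independent of $i$ and $j$.

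For each $p\in[n]$, set $S_p := \{\sigma_2(i,p) : i \in [k]\} \subseteq [n]$. Call indices $p_1',p_2'\in[n]$ row-equivalent under $B$ if rows $p_1'$ and $p_2'$ of $B$ coincide. For $p_1',p_2'\in S_p$ and any $q'\in[n]$, surjectivity of $\sigma_2$ produces $(j,q)$ with $q' = \sigma_2(j,q)$, and then $(*)$ forces $B_{p_1',q'} = A_{pq} = B_{p_2',q'}$, so $p_1'$ and $p_2'$ are row-equivalent. Hence $S_p$ lies in a single row-equivalence class $R(p)$ of $B$.

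A short counting argument fixes the sizes. Each $p'\in[n]$ equals $\sigma_2(i,p)$ for exactly $k$ pairs $(i,p)$, since $\sigma$ bijects with the fiber $\{(i',p') : i'\in[k]\}$. Summing over a row-equivalence class $R$ of $B$, the total count is $k|R|$ on one hand, and on the other $k\cdot|\{p : R(p)=R\}|$, because every such $p$ contributes all $k$ of its $\sigma_2$-values into $R$. Hence $|\{p:R(p)=R\}|=|R|$. Define $\overline{\sigma}:[n]\to[n]$ by choosing, independently for each class $R$ of $B$, any bijection from $\{p:R(p)=R\}$ onto $R$, and assembling these into a permutation of $[n]$.

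To verify $P_{\overline{\sigma}} A P_{\overline{\sigma}}^{-1}=B$ entrywise, fix $p,q\in[n]$ and pick any $p_1'\in S_p$, $q_1'\in S_q$, so that $(*)$ gives $A_{pq}=B_{p_1',q_1'}$. Since $\overline{\sigma}(p)$ and $p_1'$ are row-equivalent in $B$, one has $B_{\overline{\sigma}(p),\overline{\sigma}(q)} = B_{p_1',\overline{\sigma}(q)}$; and using symmetry of $B$ together with row-equivalence of $\overline{\sigma}(q)$ and $q_1'$, $B_{p_1',\overline{\sigma}(q)} = B_{p_1',q_1'}$. Combining, $B_{\overline{\sigma}(p),\overline{\sigma}(q)}=A_{pq}$, as required. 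The main subtlety is that $A$ may have repeated rows, so one cannot read $\overline{\sigma}$ off a naive twin structure of $J_k\otimes A$ directly; the counting step in the third paragraph is exactly what handles this degeneracy uniformly.
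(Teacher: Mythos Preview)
Your argument is correct. The paper's own proof is only a one-line sketch --- ``compute the indices of the permuted rows and columns modulo $n$'' --- which in your notation amounts to setting $\overline{\sigma}(p)=\sigma_2(1,p)$. As you observe in your final remark, this naive descent can fail to be a bijection when $B$ has repeated rows: for instance, with $A=B=J_2$ and $k=2$, the transposition of the coordinates $(1,2)$ and $(2,1)$ fixes $J_2\otimes J_2$, yet $\sigma_2(1,\cdot)$ sends both $p=1$ and $p=2$ to the same value. Your row-equivalence classes and the counting identity $|\{p:R(p)=R\}|=|R|$ are precisely what is needed to repair this, allowing $\overline{\sigma}$ to be assembled class by class. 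So your proof follows the same underlying idea as the paper but supplies the rigor the sketch omits; when $A$ (equivalently $B$) has pairwise distinct rows, each $S_p$ is forced to be a singleton and your construction collapses to the paper's ``mod $n$'' recipe.
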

\begin{proof}
It suffices to compute  indexes of permuted rows and columns of $J_k\otimes A$ under $P_{\sigma}$ and $P_{\sigma}^{-1}$ module  $n$. In this way, one can construct a permutation matrix $P_{\overline{\sigma}}$ of size $n$ such that $P_{\overline{\sigma}}AP_{\overline{\sigma}}^{-1}=B$ and this completes the proof.
\end{proof}

\begin{thm}\label{Qu}
Let $G$ be a finite group. Then $G$ is Cay-DS if and only if all of its quotient groups are Cay-DS.
\end{thm}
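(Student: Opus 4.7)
The plan is as follows. The reverse implication is immediate, since $G\cong G/\{1\}$ is itself a quotient of $G$; thus if every quotient of $G$ is Cay-DS, then so is $G$.

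For the forward direction I will prove the contrapositive: if some quotient $G/N$ of $G$ is not Cay-DS, then $G$ is not Cay-DS. Suppose $G/N$ is not Cay-DS, witnessed by two cospectral non-isomorphic Cayley graphs $\Gamma_1 = Cay(G/N, \bar S)$ and $\Gamma_2 = Cay(K, T)$, where necessarily $|K| = |G/N|$. Writing $\pi\colon G\to G/N$ and $\pi'\colon K\times C_{|N|}\to K$ for the natural projections, set
\[
S := \pi^{-1}(\bar S) \subseteq G\setminus\{1\},\qquad S' := T\times C_{|N|} \subseteq (K\times C_{|N|})\setminus\{1\}.
\]
Both $S$ and $S'$ are symmetric and avoid the identity (since $\bar S$ and $T$ are, and cosets of $N$ resp.\ $\{1\}\times C_{|N|}$ are preserved under inversion), so the graphs $\Gamma_1' := Cay(G, S)$ and $\Gamma_2' := Cay(K\times C_{|N|}, S')$ are bona fide Cayley graphs, both on $|G|$ vertices.

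The key observation is that $\Gamma_1'$ and $\Gamma_2'$ are exactly the $|N|$-fold blow-ups of $\Gamma_1$ and $\Gamma_2$ respectively: two vertices $a,b\in G$ are adjacent in $\Gamma_1'$ iff $\pi(a)\pi(b)^{-1}\in\bar S$, which depends only on the cosets, and distinct vertices lying in a common coset of $N$ are non-adjacent; the analogous statement holds for $\Gamma_2'$. Writing $A_i$ for the adjacency matrix of $\Gamma_i$ with an appropriate vertex ordering, the adjacency matrices of $\Gamma_1'$ and $\Gamma_2'$ are $J_{|N|}\otimes A_1$ and $J_{|N|}\otimes A_2$, respectively.

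Cospectrality of $\Gamma_1'$ and $\Gamma_2'$ is then immediate from the tensor-product formula for eigenvalues combined with $Spec(J_{|N|}) = \{|N|, 0, \ldots, 0\}$ and the hypothesis $Spec(A_1) = Spec(A_2)$. For non-isomorphism, suppose toward contradiction that $\Gamma_1'\cong\Gamma_2'$; then there is a permutation matrix $P_\sigma$ with $P_\sigma(J_{|N|}\otimes A_1)P_\sigma^{-1} = J_{|N|}\otimes A_2$, and Lemma \ref{jksimilar} supplies a permutation matrix $P_{\bar\sigma}$ with $P_{\bar\sigma}A_1 P_{\bar\sigma}^{-1} = A_2$, forcing $\Gamma_1\cong\Gamma_2$ and contradicting the choice of $\Gamma_1,\Gamma_2$. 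Hence $\Gamma_1'$ is a Cayley graph on $G$ admitting a non-isomorphic cospectral Cayley mate, so $G$ is not Cay-DS.

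The main potential obstacle is the non-isomorphism step: a priori two blow-ups of non-isomorphic graphs could accidentally become isomorphic under a non-block-diagonal permutation, and this is precisely the content Lemma \ref{jksimilar} rules out. Once that lemma is in hand, the rest of the argument consists of the natural lifting of connection sets and bookkeeping about symmetry and the exclusion of the identity.
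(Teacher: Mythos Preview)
Your proof is correct and follows essentially the same approach as the paper's: lift the two cospectral non-isomorphic Cayley graphs on $G/N$ and $K$ to Cayley graphs on $G$ and $K\times C_{|N|}$ by taking full preimages of the connection sets, recognize the resulting adjacency matrices as $J_{|N|}\otimes A_i$, and use the tensor spectrum together with Lemma~\ref{jksimilar} to conclude. If anything, your treatment of the $K$-side lift (taking $S'=T\times C_{|N|}=\pi'^{-1}(T)$) is cleaner than the paper's somewhat tangled notation for the analogous construction.
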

\begin{proof}
Let $G$ be a finite group  such that all of its quotient groups are Cay-DS. Then the quotient group $G/\{1\}$ is Cay-DS, which implies that the group $G$ is Cay-DS. Now suppose $G$ is Cay-DS and $N$ is a normal subgroup of $G$. We will show that the quotient group $G/N$ is also Cay-DS. We use a similar argument as in the proof of Lemma 3.5 of \cite{Azhvan}. Suppose that $G/N$ is not Cay-DS. Then there exist symmetric subsets $\overline{S}$ and $T$ of the quotient group $G/N$ and a group $H$, respectively such that two Cayley graphs $\overline{\Gamma}=Cay(G/N,\overline{S})$ and $Cay(H,T)$ are non-isomorphic and they are cospectral. 
Let $k=|N|$ and $e$ is the identity element of the cyclic group $C_k$ of order $k$. Note that $Cay(H,T)\cong \overline{\Gamma}'=Cay(\overline{H}, \overline{R})$, where $\overline{H}=H\times \{e\}$ and $\overline{R}=T\times \{e\}$. It follows that, there  exist  subsets $S_1$ and $R_1$ of  $G$ and $H\times C_{k}$ respectively,  such that we have $\overline{S}=\{Ns | s\in S_1\}$ and $\overline{R}=\{\overline{H} r | r\in R_1\}$. Let $S=\cup_{s\in S_1}Ns-\{1\}$ and $R=\cup_{r\in R_1}\overline{H}r-\{1\}$. It is easy to see that both $S$ and $R$ are symmetric subsets of  $G$ and $H\times C_{k}$ respectively and these subsets does not have the identity element of the corresponding groups. Let $A$ and $B$ be the adjacency matrix of the $\overline{\Gamma}$ and $\overline{\Gamma}'$ respectively. It is straightforward to see that the adjacency matrix of $Cay(G,S)$ and $Cay(H\times C_{k}, R)$ are $J_k\otimes A$ and $J_k\otimes B$ respectively, where  $J_k$ is the $k\times k$ matrix of all ones. The spectrum of these two latter adjacency matrices are equal, since $Spec(A)=Spec(B)$ and the eigenvalues of $J_k$ are $0$ with multiplicity $k-1$ and $k$ with multiplicity $1$. Now by Lemma \ref{jksimilar}, the two adjacency matrix $J_k\otimes A$ and $J_k\otimes B$ are not similar. So, we conclude that the group $G$ is not Cay-DS which is a contradiction. This completes our proof.
\end{proof}

The graph $Cay(G,S)$ is called Cayley isomorphism (for a short CI-graph) whenever if $Cay(G,S_{1})\cong Cay(G,S_{2})$, then there exists an element
$\sigma \in Aut(G)$ such that $S_{1}^\sigma=S_{2}$. A finite group $G$ is called a CI-group if all Cayley graphs of $G$ are CI-graphs.
All circulant CI-graphs (i.e. Cayley graphs over cyclic groups) are determined as follows.
\begin{thm}[\cite{Muz1, Muz2}]
The cyclic groups which are CI-groups are precisely those of order $n$, where $n$ is 8, 9, 18 or $n=2^{e}m$, where $e\in \{0,1,2\}$ and $m$ is odd and square-free.
\end{thm}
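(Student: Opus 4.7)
The plan is to split the classification into a \emph{necessity} direction (no other $n$ can work) and a \emph{sufficiency} direction (each listed $n$ does work), and to use the theory of Schur rings (S-rings) over cyclic groups as the common technical engine. Recall that $C_n$ is a CI-group iff whenever $\mathrm{Cay}(C_n,S_1)\cong\mathrm{Cay}(C_n,S_2)$ there is $\sigma\in\mathrm{Aut}(C_n)$ with $S_1^\sigma=S_2$. Equivalently, by a standard reformulation (Babai's criterion), $C_n$ is CI iff for every symmetric connection set $S\subseteq C_n\setminus\{1\}$, all regular cyclic subgroups of $\mathrm{Aut}(\mathrm{Cay}(C_n,S))$ isomorphic to $C_n$ are conjugate there. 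I would adopt this criterion as the working definition throughout.

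For necessity, I would construct an explicit non-CI Cayley graph on $C_n$ whenever $n$ fails to satisfy the stated conditions, namely when $n$ is divisible by $p^2$ for some odd prime $p\geq 5$, or by $16$, or by $4p$ for an odd prime $p$ (with $n\notin\{8,9,18\}$). The standard devices are wreath/tensor constructions: if $n=km$ with a nontrivial divisor structure, one builds $S$ as a "lifted" symmetric set from a proper subgroup together with a transversal, then exhibits two different lifts that produce isomorphic graphs (via a non-automorphism bijection) but are not conjugate under $\mathrm{Aut}(C_n)$. The case $n=p^2$ with $p\geq 5$ is the historical prototype: here there are explicit pairs of isomorphic but non-equivalent Cayley graphs, which already rules out cyclic CI-groups of order $p^2$ for such primes, and wreath products then handle all larger divisibility obstructions. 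The exceptional cases $n=8,9,18$ must be handled by hand.

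For sufficiency, the core is Muzychuk's classification of S-rings over $C_n$. To each connection set $S$ one associates the S-ring $\mathcal{A}(S)\subseteq\mathbb{Z}[C_n]$ generated by the transitivity module of a regular $C_n$-action inside $\mathrm{Aut}(\mathrm{Cay}(C_n,S))$. Two regular cyclic subgroups are conjugate in the automorphism group precisely when the corresponding S-rings are related by an element of $\mathrm{Aut}(C_n)$. The classification shows that when $n=2^e m$ with $e\leq 2$ and $m$ odd and square-free (or in the three sporadic cases $8,9,18$), every S-ring over $C_n$ decomposes canonically as an iterated wreath/tensor product of "trivial" and "orbit" S-rings indexed by divisors of $n$; consequently any two regular cyclic subgroups of the automorphism group of $\mathrm{Cay}(C_n,S)$ are $\mathrm{Aut}(C_n)$-equivalent, yielding the CI-property. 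The small sporadic orders $8$, $9$, $18$ must be verified separately, either by direct case analysis on their finitely many Cayley graphs or by appealing to ad-hoc S-ring enumerations.

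The principal obstacle is the S-ring classification itself: showing that for the permitted orders every S-ring on $C_n$ decomposes into the controllable "wreath-product" form requires careful arithmetic on primitive idempotents and on the lattice of S-subrings, and handling the prime $2$ (which permits squares up to $4$) is markedly subtler than the odd-prime part. I would expect that organising the induction on the prime factorisation of $n$, and in particular reconciling the behaviour of the $2$-part with the odd square-free part, is where most of the technical work lies; the sporadic orders $8,9,18$ sit precisely at the boundary where the general pattern breaks and so must be absorbed into the statement rather than into a uniform proof.
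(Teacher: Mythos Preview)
The paper does not give a proof of this theorem at all: it is quoted as a known result of Muzychuk (references \cite{Muz1, Muz2}) and used as a black box, so there is nothing in the paper to compare your proposal against.

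As a side remark on the content of your sketch: the overall strategy you describe (Babai's conjugacy criterion together with the Schur-ring classification over cyclic groups) is indeed the method of Muzychuk's original papers, so in outline you are pointing at the right argument. However, your description of the necessity direction is miscalibrated. You list ``divisible by $4p$ for an odd prime $p$'' among the obstructions to the CI-property, but $4p$ is of the form $2^{2}m$ with $m$ odd and square-free, so $C_{4p}$ \emph{is} a CI-group according to the theorem; this case should not appear in your list of forbidden orders. The correct dichotomy is: $n$ lies outside the stated list if and only if $n\notin\{8,9,18\}$ and either $8\mid n$ or $p^{2}\mid n$ for some odd prime $p$. Your restriction to $p\geq 5$ in the first clause also leaves the case $27\mid n$ (and more generally $9\mid n$ with $n\neq 9,18$) unaccounted for. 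These are bookkeeping slips rather than conceptual gaps, but they would need to be fixed before the necessity argument could be carried out.
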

Dihedral groups $D_{2p}$ are also CI-groups.
\begin{thm}[\cite{B0}] \label{D2p} For any prime number $p$, the dihedral group $D_{2p}$ is a CI-group. 
\end{thm}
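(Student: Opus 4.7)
The plan is to apply Babai's criterion for CI-graphs: $Cay(G,S)$ is a CI-graph if and only if any two regular subgroups of $A:=Aut(Cay(G,S))$ that are isomorphic to $G$ are conjugate in $A$. Accordingly, fix an inverse-closed $S\subseteq D_{2p}\setminus\{1\}$, put $\Gamma=Cay(D_{2p},S)$, and aim to show that any two regular subgroups $L,R\le A$ both isomorphic to $D_{2p}$ are $A$-conjugate. The prime $p=2$ yields $D_4\cong C_2\times C_2$, which is a CI-group by direct inspection, so I restrict to $p\ge 3$.

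The first step is a Sylow reduction. For odd $p$, the group $D_{2p}$ has a unique subgroup of order $p$; call the corresponding cyclic subgroups $P_L\le L$ and $P_R\le R$. Each $P_{\ast}$ is semi-regular on the $2p$ vertices with two orbits of length $p$. Since the $p$-part of $(2p)!$ is $p^2$, every Sylow $p$-subgroup $P$ of $A$ has order $p$ or $p^2$; in the latter case $P\cong C_p\times C_p$ because $Sym(2p)$ contains no element of order $p^2$ when $p\ge 3$. By Sylow's theorem I can conjugate $R$ inside $A$ so that $P_L,P_R\le P$.

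The second step analyses $N_A(P)$. When $|P|=p$ the two $P$-orbits are preserved or swapped by every element of $N_A(P)$, and the subgroup stabilising the orbit partition embeds into $AGL(1,p)\wr C_2$ via the identification $N_{Sym(p)}(C_p)\cong AGL(1,p)$. The involutions in $L$ and in $R$ must invert the common generator of $P$ and exchange the two $P$-orbits, which leaves only a bounded amount of freedom; this is used to produce an element of $N_A(P)$ conjugating a generating involution of $R$ onto that of $L$. When $|P|=p^2$ the subgroups $P_L$ and $P_R$ are two of the $p-1$ semi-regular lines of $P\cong C_p\times C_p$, and I would first invoke the induced linear action of $N_A(P)/C_A(P)\le GL_2(\mathbb{F}_p)$ to bring $P_R$ onto $P_L$ before applying the previous analysis.

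The hardest step will be the verification, in each of these sub-cases, that the candidate conjugating permutations actually lie in $A$ and not merely in the ambient symmetric group. This requires a fine description of which subgroups of $AGL(1,p)\wr C_2$ can occur inside $Aut(\Gamma)$, and it is at this point that the primeness of $p$ becomes essential: it makes $AGL(1,p)$ a sharply $2$-transitive Frobenius group with cyclic complement $C_{p-1}$, forcing the rigidity needed to push the combinatorial conjugations through the graph structure of $\Gamma$.
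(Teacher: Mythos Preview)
The paper does not prove this theorem at all: it is simply quoted from Babai's 1977 paper \cite{B0} and used as a black box. There is therefore no in-paper proof to compare your proposal against.

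As to the proposal itself, it is a plausible outline built on Babai's conjugacy criterion, but by your own admission it stops short of the decisive point. The ``hardest step'' you flag---showing that the permutations which conjugate one regular copy of $D_{2p}$ onto the other actually lie in $A=Aut(\Gamma)$ and not merely in $Sym(2p)$---is the entire content of the theorem, and you have not carried it out. In particular, in the $|P|=p^2$ case you would need to argue that $N_A(P)/C_A(P)$ is large enough inside $GL_2(\mathbb{F}_p)$ to move $P_R$ onto $P_L$, and nothing in your sketch guarantees this; the linear action of the full normaliser in $Sym(2p)$ does not automatically descend to $A$. Likewise, in the $|P|=p$ case the claim that the required element of $N_{Sym(2p)}(P)$ can be chosen inside $A$ is precisely what has to be proved from the structure of $S$. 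What you have written is a reasonable plan of attack, not a proof; for the actual argument one should consult \cite{B0}.
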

The Paley graph is a graph over a finite field $GF(q)$ such that $q\equiv 1$ (mod 4) and it is denoted by $P(q)$. The elements of the field are the vertices of Paley graph and two vertices are adjacent whenever their difference is a non-zero square in that field. it is well-known that Paley graph is a strongly regular graph with parameters $(v,k,\lambda,\mu)=(4t+1,2t,t-1,t)$ and it is a Cayley graph over the group $C_p^{k}$, where $q=p^{k}$. A strongly regular graph with these parameters is called conference graph. It is famous that a strongly regular graph is determined by its spectrum if and only if it is determined by its parameters.
The Paley graphs  $P(5)$, $P(13)$ and $P(17)$ are the unique strongly graphs with these parameters; on the other hand it can be found in \cite{BM} that there exist at least 41 strongly regular graphs with the same parameters as $P(29)$, at least 82 strongly regular graphs cospectral with $P(37)$ and at least 120 strongly regular graphs cospectral with $P(41)$. Furthermore, there are $15$ non-isomorphic strongly regular graphs with parameters $(25,12,5,6)$ (see \cite{PR}).

The Peisert graph, denoted by $\mathcal{P}^{*}(q)$ is defined for $q=p^{r}$, where $p$ is prime and $p\equiv 3$ (mod $4$) and $r$ is even. Let $a$ be a generator of $GF(q)$ and $M$ be a subset of $GF(q)$ as follows:
\begin{equation*}
M=\{a^{j}|j\equiv 0,1 (mod \ 4)\}.
\end{equation*}

The vertices of $\mathcal{P}^{*}(q)$ are the elements of $GF(q)$ and two vertices are adjacent if their difference is in $M$. Peisert graphs was first defined by Peisert in \cite{P}. The Paley graph $P(q)$ and the Peisert graph $\mathcal{P}^{*}(q)$ are strongly regular graphs with the same parameters.

\begin{lem} [Lemma 6.2, 6.4 and 6.6 of \cite{P}] \label{Paley}
If $p\equiv 3$ (mod $4$), then the $\mathcal{P}^{*}(p^{r})$ is not isomorphic to the Paley graph $P(p^{r})$, except when $p^{r}=3^{2}$.
\end{lem}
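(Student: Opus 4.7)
The plan is to distinguish $P(q)$ from $\mathcal{P}^{*}(q)$ by a finer invariant than the strongly regular parameters, which coincide for the two graphs. Since both are Cayley graphs on $(GF(q),+)$, any hypothetical isomorphism can be composed with a translation to fix $0$; the task then reduces to ruling out any bijection of $GF(q)$ fixing $0$ that carries the set $Q$ of nonzero squares to the Peisert connection set $M = \langle a^{4}\rangle \cup a\langle a^{4}\rangle$ and preserves the ambient edge relation.

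The invariant I would use is a weighted count of configurations in the local graph of $0$, for instance the number of $K_{4}$'s in the ambient graph through a fixed edge, refined by how these cliques intersect a second fixed edge. In $P(q)$ the connection set $Q$ is a subgroup of $GF(q)^{*}$, so the count can be expressed as a character sum involving only the quadratic character $\chi_{2}$ of $GF(q)^{*}$. In $\mathcal{P}^{*}(q)$, however, $M$ is not multiplicatively closed; it is a union of two cosets of the index-$4$ subgroup $H=\langle a^{4}\rangle$, and the analogous count introduces the quartic character $\chi_{4}$. Evaluating both in closed form via Jacobi sums yields polynomial expressions in $q$ that are strictly different for every admissible $q \neq 9$.

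The exceptional case $q = 9$ must be handled by direct inspection: in $GF(9)$ the set $M$ happens to be a field-automorphism image of $Q$, and one verifies $\mathcal{P}^{*}(9) \cong P(9)$, both being the unique conference graph on $9$ vertices, namely $K_{3}\,\square\,K_{3}$.

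The main difficulty lies in sharpening the character-sum estimates enough that the gap between the two main terms survives the subtraction. A generic Weil-type bound would give an error of order $q^{3/2}$, which dominates the gap between the main terms; instead one must compute the relevant quartic Jacobi sum explicitly, which is essentially the content of Lemmas 6.2 and 6.4 of Peisert's paper. Once this Jacobi-sum computation is carried out, the resulting arithmetic inequality separates $P(p^{r})$ from $\mathcal{P}^{*}(p^{r})$ for every $p\equiv 3\pmod{4}$ with $p^{r}\neq 9$.
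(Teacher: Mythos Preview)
The paper you are working from does not actually prove this lemma: it is stated with attribution ``Lemma 6.2, 6.4 and 6.6 of \cite{P}'' and no argument is given in the body of the paper. So there is no proof here to compare your proposal against; the authors simply import Peisert's result as a black box.

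That said, a brief remark on your plan versus what Peisert actually does. Peisert's route is not via clique-counts or Jacobi sums but via the full automorphism groups: he determines $\mathrm{Aut}(P(q))$ and $\mathrm{Aut}(\mathcal{P}^{*}(q))$ explicitly (both sit inside $\mathrm{A}\Gamma\mathrm{L}(1,q)$ since the graphs are self-complementary and symmetric), and shows that for $q\neq 9$ these groups have different orders, whence the graphs are non-isomorphic. Your proposed invariant --- a refined $K_{4}$-count evaluated through quadratic versus quartic characters --- is a genuinely different idea. It is plausible in outline, but as you yourself note, the difficulty is that a naive Weil bound is not sharp enough, so the whole argument rests on an exact Jacobi-sum evaluation that you have not carried out; at that level of detail your write-up is a heuristic rather than a proof. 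If you want a self-contained argument, the automorphism-group comparison is both shorter and more robust, and is what the cited reference actually contains.
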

\begin{defn}
An affine plane is a point-line incidence structure which satisfies the following conditions: 

(i) Given any pair of points, there is exactly one line incident to both points.

(ii) Given a point $p$ and a line $l$ not incident to $p$, there is exactly one line $l'$ through $p$ which does not meet $l$.

(iii) There exists a set of four points, no three of them are incident to a common line.
\end{defn}

Let $AG(2, q)$ denote the affine plane of odd prime order $q$ derived from the ordered pairs of $\mathbb{F}_{q} \times \mathbb{F}_{q}$. Let $A$ denote a subset of
\begin{equation*}
(\bigcup_{y\in \mathbb{F}_{q}}l_{y}) \cup l_{\infty}.
\end{equation*}
Where,
\begin{equation*}
l_{y} = \{(c, cy) : c \in \mathbb{F}_{q}\},
l_{\infty} = \{(0, c) : c \in \mathbb{F}_{q}\}.
\end{equation*}
We define $G(A, q)$ to be a graph on the points of $AG(2, q)$ such that two points are adjacent if and only if the line incident to both points has slope $m$ such that $m\in A$. It is well-known that if $|A|=\frac{q+1}{2}$, then $G(A, q)$ is a conference graph. It is easy to see that all graphs of these types can consider as Cayley graphs over $C_{p}\times C_{p}$.

\vspace{1cc}
\section {\bf Forbidden subgroups of DS groups}
In view of Proposition \ref{Non-DS}, it is important to know which groups of prime order are not Cay-DS.
\begin{prop}\label{prop29}
The groups of orders $29$, $37$ and $41$ are not DS. 
\end{prop}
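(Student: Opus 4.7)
The plan is to exhibit, for each prime $p\in\{29,37,41\}$, a Cayley graph on the cyclic group $C_p$ that admits a non-isomorphic cospectral mate, which directly contradicts the DS property of $C_p$ and hence of the (unique) group of order $p$.

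The candidate Cayley graph is the Paley graph $P(p)$. First I would recall that $P(p)$ is indeed a Cayley graph on $C_p$: its vertex set is $\mathbb{F}_p$ and its connection set is the (symmetric, since $p\equiv 1\pmod 4$) set $S$ of non-zero quadratic residues in $\mathbb{F}_p$, so $P(p)=Cay(C_p,S)$. Next I would invoke the basic fact already recorded in the preliminaries that $P(p)$ is a strongly regular graph with conference parameters $(p,\frac{p-1}{2},\frac{p-5}{4},\frac{p-1}{4})$, and that for strongly regular graphs the spectrum is determined by, and determines, the parameters.

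The heart of the argument is then a direct appeal to the enumerative results cited from \cite{BM}: for $p=29$ there are at least $41$ pairwise non-isomorphic strongly regular graphs with the parameters of $P(29)$, for $p=37$ at least $82$, and for $p=41$ at least $120$. In each case, choosing any such strongly regular graph $\Gamma'$ not isomorphic to $P(p)$ gives a graph with $Spec(\Gamma')=Spec(P(p))$ but $\Gamma'\not\cong P(p)$. By the definition of DS, the existence of such a cospectral mate for the Cayley graph $P(p)$ on $C_p$ shows that $C_p$ is not DS; since the only group of order $p$ (for $p$ prime) is $C_p$, the groups of orders $29$, $37$ and $41$ are not DS.

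There is essentially no obstacle beyond citing the known enumeration of strongly regular graphs on $29$, $37$ and $41$ vertices; the only care needed is to note that the cospectral mate $\Gamma'$ supplied by \cite{BM} need not itself be a Cayley graph (which would be required for Cay-DS but is irrelevant for DS). Thus the argument proves the DS-failure, but does not by itself settle the Cay-DS question for these groups.
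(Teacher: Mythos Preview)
Your proposal is correct and follows essentially the same approach as the paper: exhibit the Paley graph $P(p)$ as a Cayley graph on $C_p$, invoke the known enumerations of conference graphs on $29$, $37$, $41$ vertices to produce a non-isomorphic cospectral mate, and conclude that $C_p$ is not DS. The only cosmetic difference is that the paper cites \cite{Handbook} for the exact count at $p=29$ and \cite{BM} for $p=37,41$, whereas you cite \cite{BM} throughout; your closing observation that this argument does not settle Cay-DS is also apt.
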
 
\begin{proof}
It is known that there are exactly $41$ strongly $14$-regular graphs on $29$ vertices with parameters $(29,14,6,7)$ (see \cite[page 856]{Handbook}). One of these strongly regular graphs is the Paley graph $\mathcal{P}(29)$ which is a Cayley graph on the cyclic group of order $29$. It follows that there are $40$ non-isomorphic cospectral mates for $\mathcal{P}(29)$. This implies that $C_{29}$ is not DS. The same argument can be done for $37$ and $41$, noting that  there exist  at least $82$ strongly regular graphs cospectral with $P(37)$ and at least $120$ strongly regular graphs cospectral with $P(41)$ (see \cite{BM}).  
\end{proof}

In this section we prove that any finite group $G$ with a subgroup isomorphic to $C_{p}\times C_{p}$ for prime number $p\geq 5$ is non-DS. Also, we introduce some forbidden subgroups for DS groups. The Peisert graph $\mathcal{P}^{*}(p^{2})$ is not isomorphic to the Paley graph $P(p^{2})$, where $p=4t+3>7$ is prime by Lemma \ref{Paley} and in general case, in the following theorem, we prove that there exist cospectral non-isomorphic Cayley graphs on the group $C_{p}\times C_{p}$, where $p> 5$ is an arbitrary prime number. Let $GL(2,p)$ be the group of all invertible $2 \times 2$ matrices over a finite field $GF(p)$ and $PGL(2,p)$ be the quotient group of $GL(2,p)$ over the center of $GL(2,p)$, i.e. $\{cI_{2}|C_{p} \setminus \{0\} \}$ which is denoted by $Z(GL(2,p))$.\\

\begin{thm}\label{main1}
Let $G$ be a finite group and $p> 5$ be a prime number. If $H$ is a subgroup of $G$ which is isomorphic to $C_{p}\times C_{p}$, then $G$ is not Cay-DS.
If $C_5 \times C_5$ is  isomorphic to a subgroup of $G$, then $G$ is not DS. 
\end{thm}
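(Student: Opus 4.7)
The plan is to reduce, via Proposition \ref{Non-DS}, to the case $G = C_p \times C_p$ itself: it suffices to prove that $C_p \times C_p$ is not Cay-DS for every prime $p > 5$, and that $C_5 \times C_5$ is not DS. The second claim is immediate from the Preliminaries, since the Paley graph $P(25)$ is a Cayley graph on $C_5 \times C_5$, strongly regular with parameters $(25, 12, 5, 6)$, and one of at least $15$ non-isomorphic strongly regular graphs with these parameters. Hence $P(25)$ admits a non-isomorphic cospectral mate, and $C_5 \times C_5$ is not DS.

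For $p > 5$ I split on the residue of $p$ modulo $4$. If $p \equiv 3 \pmod 4$ (so $p \geq 7$), both the Paley graph $P(p^2)$ and the Peisert graph $\mathcal{P}^{*}(p^2)$ are Cayley graphs on the additive group of $\mathbb{F}_{p^2} \cong C_p \times C_p$, and both are strongly regular with identical parameters, hence cospectral. By Lemma \ref{Paley} they are non-isomorphic whenever $p^r \neq 3^2$, which holds here, so $C_p \times C_p$ is not Cay-DS in this subcase.

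If instead $p \equiv 1 \pmod 4$ and $p \geq 13$, I turn to the affine-plane construction $G(A, p)$ from the Preliminaries. For any $A \subseteq \mathbb{F}_p \cup \{\infty\}$ with $|A| = (p+1)/2$, the graph $G(A, p)$ is a conference graph realized as a Cayley graph on $C_p \times C_p$, with the same parameters as $P(p^2)$; any two such graphs are therefore cospectral. It suffices to exhibit two sets $A_1, A_2$ of this size with $G(A_1, p) \not\cong G(A_2, p)$. A crude orbit count already forces existence: the natural action of $PGL(2, p)$ on $\mathbb{F}_p \cup \{\infty\}$ induces graph isomorphisms among the $G(A, p)$'s, so each orbit of sets contains at most $|PGL(2,p)| = p(p^2 - 1)$ elements, while the total number of candidate $A$'s is $\binom{p+1}{(p+1)/2}$. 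For $p \geq 13$ the binomial strictly exceeds $p(p^2 - 1)$, forcing at least two orbits and hence two non-isomorphic $G(A, p)$'s.

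The hardest part of this third subcase is ensuring that representatives of different $PGL(2,p)$-orbits really give non-isomorphic graphs — a priori, the graph isomorphism relation could be coarser than the $PGL(2,p)$-orbit relation. To make the argument airtight, one must show that any graph isomorphism $G(A_1, p) \to G(A_2, p)$ is induced by an element of $PGL(2, p)$ acting on slopes. This can be carried out by tracking how an arbitrary isomorphism permutes the pencils of parallel lines of $AG(2, p)$ (which are recoverable from the graph as certain coclique or maximal-clique structures), thereby descending to a projectivity of $\mathbb{P}^1(\mathbb{F}_p)$. A cleaner concrete alternative is to pick two explicit sets of size $(p+1)/2$ and distinguish the resulting graphs by a small invariant — for example, the number of triangles through a fixed edge, or the order of the full automorphism group (the Paley case being exceptionally symmetric, with an extra Frobenius-like action). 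Either route, combined with the two preceding subcases, completes the proof.
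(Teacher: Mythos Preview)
Your treatment of $p=5$ and of the subcase $p\equiv 3\pmod 4$ is correct and self-contained; the Paley/Peisert pair is a clean way to handle that subcase, and it is genuinely different from the paper, which does not use Peisert graphs at all.

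The gap you yourself flag in the subcase $p\equiv 1\pmod 4$ is real, and neither of your proposed fixes closes it. Your fix (1) would require that the only $p$-cliques and $p$-cocliques in a conference graph $G(A,p)$ are the affine lines; this is not obvious (and not true in general for strongly regular graphs), so an arbitrary graph isomorphism need not descend to a permutation of parallel classes. Your fix (2) yields, at best, a verification for finitely many $p$, not a proof for all $p\equiv 1\pmod 4$.

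The missing ingredient is precisely what the paper uses: $C_p\times C_p$ is a CI-group. Hence if $Cay(C_p\times C_p,S_{A_1})\cong Cay(C_p\times C_p,S_{A_2})$ then some $\sigma\in Aut(C_p\times C_p)=GL(2,p)$ sends $S_{A_1}$ to $S_{A_2}$; since each $S_A$ is a union of one-dimensional subspaces minus the origin, this is exactly the $PGL(2,p)$-action on slope sets. So graph isomorphism \emph{equals} the $PGL(2,p)$-orbit relation, and your orbit count becomes rigorous. The paper moreover replaces your inequality $\binom{p+1}{(p+1)/2}>|PGL(2,p)|$ by the sharper observation that a single orbit would force $\binom{p+1}{(p+1)/2}$ to \emph{divide} $|PGL(2,p)|=p(p-1)(p+1)$; this divisibility already fails at $p=7$ and $p=11$, so the $G(A,p)$ argument covers all $p\ge 7$ uniformly and the Paley/Peisert case split becomes unnecessary.
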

\begin{proof}
As we mentioned before, the graph $G(A, p)$ is a conference graph whenever $|A|=\frac{p+1}{2}$. It is trivial to see that this graph is $Cay(C_{p}\times C_{p},S_{A})$, where $S_{A}=\{l_{y}|y \in A\} \setminus \{(0,0)\}$. Let $T$ be collections of all $S_{A}$. Following \cite{Ca}, $PGL(2,p)$ acts on the set $T$ (see section 4 of \cite{Ca}). Suppose in contrary that all Cayley graphs of these types are isomorphic. Therefore $T=\{(S_{A})^{g}|g\in PGL(2,p)\}$ for some fixed $A$ since $C_{p}\times C_{p}$ is a CI-group. Hence the order of an orbit of $S_{A}$, for some fixed $A$ is $[PGL(2,p):(PGL(2,p))_{S_{A}}]$. It follows that $|T|=[PGL(2,p):(PGL(2,p))_{S_{A}}]$. Thus
\begin{equation*}
|T|=\binom {p+1} {\frac{p+1}{2}}=[PGL(2,p):(PGL(2,p))_{S_{A}}]=\frac{|PGL(2,p)|}{|(PGL(2,p))_{S_{A}}|}.
\end{equation*}
Therefore $\binom {p+1} {\frac{p+1}{2}}$ divides $|PGL(2,p)|$ . On the other hand, $|PGL(2,p)|=\frac{(p^{2}-1)(p^{2}-p)}{p-1}$ and clearly $(p+1)!|(\frac{p+1}{2})!(\frac{p+1}{2})!(p-1)(p+1)p$. It follows that $(p-2)!|(\frac{p+1}{2})!(\frac{p+1}{2})!$ and this is a contradiction for $p\geq 7$. Furthermore, there are 15 non-isomorphic strongly regular graphs with parameters $(25,12,5,6)$ (see \cite{PR}) and this completes the proof by Proposition \ref{Non-DS}.
\end{proof}

\begin{prop}\label{pDS1}
The following groups are forbidden subgroups for a DS group:

\noindent 1) $C_{m}^{m}$, $m\geq 3$,

\noindent 2) $C_{4}\times C_{4}$,

\noindent 3) $C_{3}\times C_{6}$.
\end{prop}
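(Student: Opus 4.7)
My plan is as follows. By Proposition \ref{Non-DS}, it suffices to prove that each of the three listed groups is itself not DS, i.e., admits a Cayley graph with a non-isomorphic cospectral mate. Item (1) in its full generality reduces, via Proposition \ref{Non-DS}, to at most three base cases: if $m$ has a prime factor $p \geq 5$ then $(C_m)^m$ contains $C_p \times C_p$, which is non-DS by Theorem \ref{main1}; if $4 \mid m$ then $(C_m)^m$ contains $C_4 \times C_4$, covered by case (2); and in every remaining instance $m$ has the form $2^a 3^b$ with $a \in \{0,1\}$ and (since $m \geq 3$) $b \geq 1$, so that $(C_m)^m \supseteq (C_3)^m \supseteq (C_3)^3$. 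Consequently it remains to prove that each of $(C_3)^3$, $C_4 \times C_4$, and $C_3 \times C_6$ is not DS.

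For $C_4 \times C_4$ I would invoke the classical pair of strongly regular graphs with parameters $(16, 6, 2, 2)$: the $4 \times 4$ rook graph $K_4 \square K_4$, realized as $Cay(C_4 \times C_4, S)$ with $S = \{(\pm 1, 0), (2, 0), (0, \pm 1), (0, 2)\}$, and the Shrikhande graph, realized as $Cay(C_4 \times C_4, S')$ with $S' = \{\pm(1,0),\, \pm(0,1),\, \pm(1,1)\}$. Both graphs share the same parameters and are therefore cospectral; they are well known to be non-isomorphic, since the rook graph contains $K_4$-subgraphs while every maximal clique of the Shrikhande graph has size $3$.

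For $(C_3)^3$ and $C_3 \times C_6$ I would exhibit an explicit pair of non-isomorphic cospectral Cayley graphs on each group. Since both groups are abelian, cospectrality of $Cay(G, S_1)$ and $Cay(G, S_2)$ reduces to equality of the multisets $\bigl\{\sum_{s \in S_i} \chi(s) : \chi \in \widehat{G}\bigr\}$ of character sums, which gives a concrete combinatorial criterion, while non-isomorphism can be certified by standard graph invariants such as triangle counts at vertices or the automorphism group order. The main obstacle is identifying clean symmetric connection sets $S_1, S_2$ realizing the cospectrality; for the small orders at hand ($27$ and $18$) this amounts to a finite and tractable search over connection sets of each admissible degree, with concrete instances either produced by hand or taken from the existing literature on Cayley graphs of small abelian groups.
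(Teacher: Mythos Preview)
Your overall strategy is sound, but the paper takes a different and more uniform route. Rather than reducing case (1) to base cases via subgroups and then searching for explicit cospectral pairs, the paper observes directly that the Hamming graph $H(m,m)=K_m\,\square\,\cdots\,\square\,K_m$ is a Cayley graph on $C_m^m$ and invokes the known fact (from \cite{BH}) that $H(m,m)$ is not DS for all $m\geq 3$; this dispatches (1) in one line with no case analysis. For (2) and (3) the paper notes that $K_m\,\square\,K_n$ is simultaneously the line graph $L(K_{m,n})$ and a Cayley graph on $C_m\times C_n$, and then cites the characterization in \cite{VH} of exactly when $L(K_{m,n})$ fails to be DS---the exceptional pairs include $\{4,4\}$ and $\{3,6\}$.

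Your treatment of $C_4\times C_4$ via the rook graph versus the Shrikhande graph is in fact the paper's argument made explicit: the Shrikhande graph is precisely the cospectral mate witnessing that $L(K_{4,4})$ is not DS. Your subgroup reduction for (1) is correct but unnecessary once one has the Hamming-graph fact. The remaining gap in your proposal is that for $(C_3)^3$ and $C_3\times C_6$ you only promise a finite search rather than produce witnesses; such witnesses certainly exist (they are $H(3,3)$ and $L(K_{3,6})$ together with their known cospectral mates), so the plan would succeed, but the paper's route via structural results is both shorter and complete without computation.
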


\begin{proof}
We first note that the line graph of the complete bipartite graph $K_{m,n}$ is Cartesian product of two complete graphs $K_{m}$ and $K_{n}$. On the other hand, $K_{m,n}$ is not DS if and only if $\{m,n\} = \{4, 4\}$, $\{m, n\} = \{6, 3\}$, or $\{m, n\} = \{2t^{2} + t, 2t^{2} - t\}$ and there exists a strongly regular graph with spectrum $\{[2t^{2}]^{1}, [t ]^{2t^{2}-t-1}, [-t ]^{2t^{2}+t-1}\}$ (such a strongly regular graph comes from a symmetric Hadamard matrix with constant diagonal of size $4t^{2}$) (see \cite{VH}). On the other hand, it is easy to construct a Cayley graph over $C_{m}\times C_{n}$ which is Cartesian product of two complete graphs $K_{m}$ and $K_{n}$. Therefore 2, 3 are obvious. Recall that Hamming graph $H(m,n)$, is Cartesian product of $m$ complete graphs $K_{n}$ and $H(m,m)$ for $m\geq 3$ are not determined by the spectrum (see \cite{BH}) and this implies 1 and completes the proof.
\end{proof}

\begin{rem} {\rm Some parts of Proposition \ref{pDS1} can also be proved by the results the next sections. For example $C_4 \times C_4$ is not Cay-DS (see Table \ref{T01} and Theorem \ref{2groups}). 
}
\end{rem}

\section{\bf Cay-DS and DS $p$-Groups}
Using Theorem \ref{main1} we can prove Theorem \ref{corp}.\\

{\bf Proof of Theorem \ref{corp}.} 
Let $P$ be a Sylow $p$-subgroup ($5$-subgroup, respectively) of a finite Cay-DS group (DS group, respectively), where $p>3$. It follows from Theorem \ref{main1} and Proposition \ref{Non-DS} that $P$ has no subgroup isomorphic to $C_p \times C_p$. Since $p>2$, Lemma 1.4 of \cite{Ber} implies that $P$ is cyclic. $\hfill \Box$

It is known that all groups of order at most $9$ are DS (see \cite{VH}). In the following result, we prove that all groups of order at most $11$ are DS. 
\begin{prop}\label{at11}
Every group of order at most $11$ is DS. 
\end{prop}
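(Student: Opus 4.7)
Groups of order at most $9$ are DS by the classical fact that all regular graphs on at most $9$ vertices are DS, as already cited in the introduction. It therefore suffices to treat the three remaining groups of orders $10$ and $11$, namely $C_{10}$, $D_{10}$, and $C_{11}$. Since regularity (together with the degree and the number of vertices) is determined by the spectrum — the largest eigenvalue of a graph equals the average degree if and only if the graph is regular — any cospectral mate of a Cayley graph $\mathrm{Cay}(G,S)$ on such a group $G$ is itself a $|S|$-regular graph on $|G|$ vertices. Hence the task reduces to a finite comparison, for each Cayley graph on one of these groups, against the list of $k$-regular graphs on $10$ or $11$ vertices.

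The plan is, for each of the three groups, to enumerate its Cayley graphs up to isomorphism and verify DS individually. For $G = C_{11}$, the primality of $11$ forces every non-empty symmetric $S$ to generate $C_{11}$, so all non-trivial Cayley graphs are connected circulants of even degree $2, 4, 6, 8$ or $10$; the orbits of symmetric subsets under $\mathrm{Aut}(C_{11})\cong (\mathbb{Z}/11\mathbb{Z})^\times$ are few and yield a short list. For $G = C_{10}$ and $G = D_{10}$, the same enumeration is carried out, exploiting that both are CI-groups (the former by the classification cited before Theorem \ref{D2p}, the latter by Theorem \ref{D2p} itself). Disconnected Cayley graphs reduce to the case of disjoint unions of isomorphic copies of connected Cayley graphs on a proper subgroup of order at most $5$, so their spectra consist of the subgroup-spectrum with all multiplicities rescaled; a direct count of the multiplicity of the degree (which equals the number of connected components of any cospectral mate) confines the possible mates to a very small family which can be checked by inspection.

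For each graph on the resulting finite list, the DS verification is then a direct spectral comparison with the complete catalogue of $k$-regular graphs on $10$ or $11$ vertices available in the literature. The main obstacle is the bookkeeping for the intermediate degrees $4$ and $6$, where the number of $k$-regular graphs is largest; however, no accidental cospectrality from strongly regular constructions can occur here, since such pairs first appear at $n=16$ (the Shrikhande pair) and at the Paley-graph orders $25, 29, 37, 41$ discussed in the preceding section — all exceeding $11$. This keeps the finite check tractable and completes the argument.
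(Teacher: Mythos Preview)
Your overall strategy---enumerate the Cayley graphs on $C_{10}$, $D_{10}$, $C_{11}$ and compare spectra against the finite catalogue of $k$-regular graphs on $10$ or $11$ vertices---is exactly the paper's approach. The execution differs in one respect worth flagging.

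Your final paragraph leans on the observation that strongly regular cospectral pairs first appear at $n=16$, presenting this as the reason the check stays tractable. This is a red herring: cospectral \emph{regular} (not strongly regular) pairs on $10$ vertices \emph{do} exist, and ruling out SRG coincidences says nothing about them. The paper's actual shortcut at $n=10$ is the classical fact from \cite{CD} (p.~398) that, up to complementation, there is exactly one pair $(\Gamma_1,\Gamma_2)$ of non-isomorphic cospectral regular graphs on $10$ vertices; the paper then checks directly that none of $\Gamma_1,\Gamma_2,\Gamma_1^{c},\Gamma_2^{c}$ is cospectral with any of the $32$ Cayley graphs on groups of order $10$. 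That is what makes the $n=10$ case genuinely short, not the absence of small SRG pairs. For $n=11$ the paper does precisely what you outline: it reduces by complementation to the two $4$-regular circulants and compares their characteristic polynomials against all $265$ connected $4$-regular graphs on $11$ vertices. So your plan is sound, but replace the SRG remark with the cited classification of cospectral regular pairs on $10$ vertices to make the $n=10$ step honest.
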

\begin{proof}
There are two non-isomorphic groups of order $10$, specially the cyclic group  $C_{10}$ and the dihedral group $D_{10}$. The total number of non-isomorphic connected Cayley graphs over $D_{10}$ and $C_{10}$ are $16$ and the non-isomorphic mates are not cospectral. Therefore, the latter groups are Cay-DS. By  [\cite{CD}, p. 398], there are only two pairs $(\Gamma_1,\Gamma_2)$ and $(\Gamma_1^{c},\Gamma_2^{c})$  of regular non-isomorphic cospectral graphs with $10$ vertices (here $\Gamma^{c}$ denotes the complement of the graph $\Gamma$). The graphs $\Gamma_1$ and $\Gamma_2$  are shown in the Figure \ref{CRP1}. We have checked that non of these four graphs are cospectral with any of $32$ Cayley graphs over the groups of order $10$. Hence all group of order at most $10$ are DS. 

The only group of order $11$ is $C_{11}$ and by the main result of \cite{DJO}, $C_{11}$ is Cay-DS. Now we prove that $C_{11}$ is also DS. The total number of non-isomorphic connected Cayley graphs on $C_{11}$ is equal to $7$, as follows:  one is $2$-regular, two are $4$-regular, two are $6$-regular, one is $8$-regular and one is $10$-regular. It is easy to see that the $8$-regular graph is the complement of the $2$-regular graph and so, both of them are DS. Also, the $10$-regular graph is DS, since it is the complete graph with $11$ vertices. The two remaining $6$-regular graphs are the complement of two $4$-regular graphs. The characteristic polynomials of these two $4$-regular graphs are $(x-4)( x^5+2x^4-5x^3-13x^2-7x-1)^2$ and $(x-4) (x^5+2x^4-5x^3-2x^2+4x-1)^2$. We have $265$ non-isomorphic $4$-regular graphs and none of them are  cospectral mate of our two $4$-regular Cayley graphs. This completes our proof.
\end{proof}

\begin{figure}[htb]
\centering
\begin{tikzpicture}[scale=0.85]
\node (1) [circle, minimum size=4 pt, fill=black, line width=0.250 pt, draw=black] at (100.0pt, -75.0pt)  {};
\node (2) [circle, minimum size=4 pt, fill=black, line width=0.250 pt, draw=black] at (150.0pt, -75.0pt)  {};
\node (3) [circle, minimum size=4 pt, fill=black, line width=0.250 pt, draw=black] at (200.0pt, -75.0pt)  {};
\node (4) [circle, minimum size=4 pt, fill=black, line width=0.250 pt, draw=black] at (112.5pt, -100.0pt)  {};
\node (5) [circle, minimum size=4 pt, fill=black, line width=0.250 pt, draw=black] at (187.5pt, -100.0pt)  {};
\node (6) [circle, minimum size=4 pt, fill=black, line width=0.250 pt, draw=black] at (112.5pt, -137.5pt)  {};
\node (7) [circle, minimum size=4 pt, fill=black, line width=0.250 pt, draw=black] at (187.5pt, -137.5pt)  {};
\node (8) [circle, minimum size=4 pt, fill=black, line width=0.250 pt, draw=black] at (100.0pt, -162.5pt)  {};
\node (9) [circle, minimum size=4 pt, fill=black, line width=0.250 pt, draw=black] at (150.0pt, -162.5pt)  {};
\node (10) [circle, minimum size=4 pt, fill=black, line width=0.250 pt, draw=black] at (200.0pt, -162.5pt)  {};
\node (11) [circle, minimum size=4 pt, fill=black, line width=0.250 pt, draw=black] at (262.5pt, -75.0pt)  {};
\node (12) [circle, minimum size=4 pt, fill=black, line width=0.250 pt, draw=black] at (312.5pt, -75.0pt)  {};
\node (13) [circle, minimum size=4 pt, fill=black, line width=0.250 pt, draw=black] at (362.5pt, -75.0pt)  {};
\node (14) [circle, minimum size=4 pt, fill=black, line width=0.250 pt, draw=black] at (275.0pt, -100.0pt)  {};
\node (15) [circle, minimum size=4 pt, fill=black, line width=0.250 pt, draw=black] at (350.0pt, -100.0pt)  {};
\node (16) [circle, minimum size=4 pt, fill=black, line width=0.250 pt, draw=black] at (275.0pt, -137.5pt)  {};
\node (17) [circle, minimum size=4 pt, fill=black, line width=0.250 pt, draw=black] at (350.0pt, -137.5pt)  {};
\node (18) [circle, minimum size=4 pt, fill=black, line width=0.250 pt, draw=black] at (262.5pt, -162.5pt)  {};
\node (19) [circle, minimum size=4 pt, fill=black, line width=0.250 pt, draw=black] at (362.5pt, -162.5pt)  {};
\node (20) [circle, minimum size=4 pt, fill=black, line width=0.250 pt, draw=black] at (312.5pt, -162.5pt)  {};
\draw [line width=0.75, color=black] (1) to  (2);
\draw [line width=0.75, color=black] (2) to  (3);
\draw [line width=0.75, color=black] (3) to  (10);
\draw [line width=0.75, color=black] (1) to  (8);
\draw [line width=0.75, color=black] (8) to  (9);
\draw [line width=0.75, color=black] (10) to  (9);
\draw [line width=0.75, color=black] (3) to  (5);
\draw [line width=0.75, color=black] (2) to  (5);
\draw [line width=0.75, color=black] (1) to  (4);
\draw [line width=0.75, color=black] (2) to  (4);
\draw [line width=0.75, color=black] (3) to  (7);
\draw [line width=0.75, color=black] (1) to  (6);
\draw [line width=0.75, color=black] (5) to  (7);
\draw [line width=0.75, color=black] (4) to  (6);
\draw [line width=0.75, color=black] (7) to  (9);
\draw [line width=0.75, color=black] (6) to  (9);
\draw [line width=0.75, color=black] (7) to  (8);
\draw [line width=0.75, color=black] (6) to  (10);
\draw [line width=0.75, color=black] (8) to  (4);
\draw [line width=0.75, color=black] (10) to  (5);
\draw [line width=0.75, color=black] (11) to  (12);
\draw [line width=0.75, color=black] (12) to  (13);
\draw [line width=0.75, color=black] (13) to  (19);
\draw [line width=0.75, color=black] (20) to  (19);
\draw [line width=0.75, color=black] (18) to  (20);
\draw [line width=0.75, color=black] (11) to  (18);
\draw [line width=0.75, color=black] (13) to  (15);
\draw [line width=0.75, color=black] (13) to  (16);
\draw [line width=0.75, color=black] (11) to  (14);
\draw [line width=0.75, color=black] (11) to  (17);
\draw [line width=0.75, color=black] (19) to  (17);
\draw [line width=0.75, color=black] (19) to  (15);
\draw [line width=0.75, color=black] (18) to  (16);
\draw [line width=0.75, color=black] (18) to  (14);
\draw [line width=0.75, color=black] (20) to  (17);
\draw [line width=0.75, color=black] (20) to  (16);
\draw [line width=0.75, color=black] (14) to  (12);
\draw [line width=0.75, color=black] (12) to  (15);
\draw [line width=0.75, color=black] (15) to  (17);
\draw [line width=0.75, color=black] (14) to  (16);
\end{tikzpicture}
\caption{The regular cospectral graphs with cospectral complement }\label{CRP1}
\end{figure}
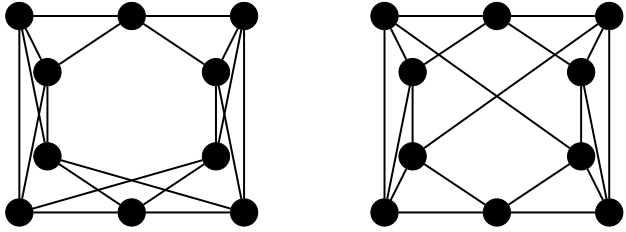

In the following, we use the symbol $S_G$ to show that $S$ is a symmetric subset of the group $G$. Also, the symbol $\left(S_{G_1},S_{G_2}\right)$ means two Cayley graphs $Cay(G_1,S_{G_1})$ and $Cay(G_2,S_{G_2})$ are non-isomorphic cospectral mates. We have used the software {\sf GAP} \cite{gap} to do our calculations and writing algorithms. Also, when we write $G_{i}^{n}$, it means that the group $G$ is of order $n$ and has $i$-th position in the standard library of group theory package in {\sf GAP}.

We need the following lemma in the proof of Theorem \ref{2groups}.
\begin{lem}\label{SpC32}
The only group of order $32$  in which all of whose  maximal subgroups are isomorphic to either $G_{12}^{16}$ or $G_{13}^{16}$ is isomorphic to $G_{50}^{32}$.
\end{lem}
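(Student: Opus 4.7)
The plan is to combine a handful of elementary group-theoretic constraints on $G$ with a finite check among the $51$ groups of order $32$. Both $G_{12}^{16}\cong C_{2}\times Q_{8}$ and $G_{13}^{16}\cong D_{8}\ast C_{4}$ are nonabelian of order $16$ with exponent $4$ and commutator subgroup of order $2$. Since every cyclic subgroup of the $2$-group $G$ is contained in some maximal subgroup, the hypothesis forces $\exp(G)=4$, so in particular $G$ is itself nonabelian. Moreover $Z(G)\le Z(M)$ for every maximal subgroup $M$, and since $Z(C_{2}\times Q_{8})\cong C_{2}^{2}$ while $Z(D_{8}\ast C_{4})\cong C_{4}$, the center $Z(G)$ has exponent at most $4$ and is even elementary abelian whenever at least one maximal subgroup of $G$ is isomorphic to $C_{2}\times Q_{8}$.

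I would then try to identify $G$ as an extraspecial $2$-group of order $32$. For each maximal $M$, the subgroup $M'\le G'\cap Z(M)$ is normal in $G$ of order $2$; combined with the facts $M_{1}M_{2}=G$ for distinct maximal subgroups, the exponent-$4$ condition, and the constraints on $Z(G)$ just derived, this should force $G'=Z(G)=\Phi(G)\cong C_{2}$ and $G/\Phi(G)\cong C_{2}^{4}$, i.e., $G$ extraspecial. The two extraspecial $2$-groups of order $32$ are $2^{1+4}_{+}\cong D_{8}\ast D_{8}$ and $2^{1+4}_{-}\cong D_{8}\ast Q_{8}$. The plus-type contains $D_{8}\times C_{2}\cong G_{11}^{16}$ as a maximal subgroup (take one $D_{8}$ factor together with an involution from the second $D_{8}$ lying outside its center), which is excluded by hypothesis, whereas every maximal subgroup of $2^{1+4}_{-}$ turns out to be isomorphic to either $C_{2}\times Q_{8}=G_{12}^{16}$ or $D_{8}\ast C_{4}\cong C_{4}\ast Q_{8}=G_{13}^{16}$, as witnessed by the explicit subgroups $\langle i,j,s\rangle$ and $\langle r,i,j\rangle$ in the presentation $2^{1+4}_{-}=\langle r,s,i,j\rangle$ with $D_{8}=\langle r,s\rangle$ and $Q_{8}=\langle i,j\rangle$. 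Identifying $2^{1+4}_{-}$ with $G_{50}^{32}$ in the \textsf{GAP} SmallGroups library then finishes the proof.

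The main obstacle is the structural reduction to the extraspecial case; controlling $|Z(G)|$ and $|G'|$ simultaneously when both types of maximal subgroups can coexist requires some care, because the allowed centers have different exponents and the standard bounds on $d(M)$ in terms of $d(G)$ for a $p$-group are too weak by themselves. In keeping with the computational idiom already used in this section of the paper, the safest backup is to enumerate the $51$ groups of order $32$ in \textsf{GAP} and verify directly that $G_{50}^{32}$ is the only one whose maximal subgroups all lie in $\{G_{12}^{16},G_{13}^{16}\}$.
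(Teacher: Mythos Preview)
Your fallback plan---enumerating the $51$ groups of order $32$ in \textsf{GAP} and checking which have all maximal subgroups in $\{G_{12}^{16},G_{13}^{16}\}$---is \emph{exactly} the paper's entire proof; the authors simply write ``It can be checked by {\sf GAP} \cite{gap} using its Small Groups Library.'' So your proposal is correct, and in fact the only fully justified part of it is the part that coincides with the paper.

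Your more ambitious structural route (force $\exp(G)=4$, then argue $G'=Z(G)=\Phi(G)\cong C_2$ so that $G$ is extraspecial, and finally distinguish $2^{1+4}_{+}$ from $2^{1+4}_{-}$ via the presence or absence of a $D_8\times C_2$ maximal subgroup) is a genuinely different and more illuminating approach, and the endgame is sound: $G_{50}^{32}$ really is the minus-type extraspecial group, and its fifteen maximal subgroups are all isomorphic to $C_2\times Q_8$ or $Q_8\!*\!C_4\cong D_8\!*\!C_4$. The gap you yourself flag, however, is real: from $|M'|=2$ for every maximal $M$ one does not immediately get $|G'|=2$, and the mixed possibilities $Z(M)\cong C_4$ versus $Z(M)\cong C_2^2$ mean the bound $Z(G)\le\bigcap_M Z(M)$ does not by itself pin down $|Z(G)|$. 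Closing this would require either a short case analysis on $\Phi(G)$ (which must lie in every $M$ and have order at most $8$) or an appeal to the quadratic-form description of maximal subgroups of extraspecial groups \emph{after} the reduction. Since the paper is content with the one-line computational verification, your theoretical sketch is extra credit rather than a replacement; if you want it to stand alone, you would need to supply the missing step that forces $|\Phi(G)|=2$.
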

\begin{proof}
It can be checked by {\sf GAP} \cite{gap} using its Small Groups Library. 
\end{proof}

In the following, we characterize all finite Cay-DS $2$-groups. Indeed all  finite Cay-DS $2$-groups are DS with two possible exceptions  $G_{12}^{16}$ and $G_{13}^{16}$.

\begin{thm}\label{2groups}
Let $G$ be a $2$-group of order $2^n$, $n\geq 1$. Then

\noindent 1) if $1\leq n\leq 3$, then $G$ is DS,

\noindent 2) if $n=4$, then  $G$ is Cay-DS if and only if $G\cong G_{12}^{16}$ or $G_{13}^{16}$.

\noindent 3) if $n\geq 5$, then $G$ is not Cay-DS.
\end{thm}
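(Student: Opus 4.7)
The plan splits naturally into three regimes according to $n$. For part~(1), when $n\leq 3$, every Cayley graph on $G$ is a regular graph on at most $8$ vertices, and the standard fact quoted in the introduction that all regular graphs with at most $9$ vertices are DS makes every such Cayley graph DS; hence $G$ itself is DS.

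For part~(2), both directions need separate treatment. To show that the twelve groups of order $16$ other than $G_{12}^{16}$ and $G_{13}^{16}$ fail to be Cay-DS, I would exhibit, for each such group $G$, an explicit pair of non-isomorphic cospectral Cayley graphs $Cay(G,S)$ and $Cay(G',S')$ with $|G'|=16$; these pairs should be extracted from an exhaustive {\sf GAP} enumeration of connected Cayley graphs on the $14$ groups of order $16$ and recorded in a table. As already flagged by the remark after Proposition~\ref{pDS1}, this enumeration in particular witnesses $C_4\times C_4$ as non Cay-DS. Conversely, to prove that $G_{12}^{16}$ and $G_{13}^{16}$ are Cay-DS, I would compute in {\sf GAP} the spectrum of every Cayley graph on these two groups and check that no such spectrum is matched by a non-isomorphic Cayley graph on any group of order $16$.

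For part~(3), the heart of the argument is to first rule out Cay-DS groups of order $32$ and then propagate upward. If a group $G$ of order $32$ were Cay-DS, then by Proposition~\ref{Non-DS} every maximal subgroup (of order $16$) would be Cay-DS, and by part~(2) every such subgroup must be isomorphic to $G_{12}^{16}$ or $G_{13}^{16}$; Lemma~\ref{SpC32} then forces $G\cong G_{50}^{32}$. It remains only to dispose of this last candidate by exhibiting an explicit pair of non-isomorphic cospectral Cayley graphs on $G_{50}^{32}$, again a finite {\sf GAP} verification. Once every group of order $32$ is known not to be Cay-DS, the case $n\geq 5$ follows immediately from Proposition~\ref{Non-DS}, since any $2$-group of order $2^n$ with $n\geq 5$ contains a subgroup of order $32$ by the standard subgroup structure of finite $p$-groups. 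The main obstacle throughout is computational: producing the explicit cospectral mate on $G_{50}^{32}$, and more delicately certifying by exhaustive enumeration that neither $G_{12}^{16}$ nor $G_{13}^{16}$ admits any cospectral Cayley mate. Lemma~\ref{SpC32} is crucial here because it reduces the order-$32$ check from all $51$ groups down to a single candidate.
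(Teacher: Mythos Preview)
Your proposal is correct and follows essentially the same route as the paper: part~(1) via the DS property of small regular graphs, part~(2) by exhaustive {\sf GAP} enumeration producing the table of cospectral pairs for the twelve non-exceptional groups and a certificate for $G_{12}^{16}$ and $G_{13}^{16}$, and part~(3) by combining Proposition~\ref{Non-DS} with Lemma~\ref{SpC32} to reduce to $G_{50}^{32}$ and then disposing of it by an explicit cospectral Cayley mate (in the paper the mate lives on $G_6^{32}$ rather than on $G_{50}^{32}$ itself, but this makes no difference).
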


\begin{proof}
1) \; The proof follows from Proposition \ref{at11}.\\

2) \; We have to notice that there are exactly $14$ non-isomorphic groups of order $16$ (e.g., use {\sf AllSmallGroups(16)} of {\sf GAP} \cite{gap}). The presentations of  groups of order $16$ are given in Table \ref{T01}. In Table \ref{T11}, we exhibit  non-isomorphic cospectral Cayley graphs for $12$ of these groups with the exceptions  $G_{12}^{16}$ and $G_{13}^{16}$. 
This means that all groups of order $16$ are not Cay-DS with possible exceptions $G_{12}^{16}$ and $G_{13}^{16}$. 
To prove that the latter groups are Cay-DS, we have simply  constructed all  connected Cayley graphs over  all  groups of order $16$ and examined that each connected Cayley graph over $G_{12}^{16}$ or $G_{13}^{16}$ is not   copsectral with a non-isomorphic  Cayley graphs over any group of order $16$. In this process, we have used {\sf GRAPE}  package \cite{grape} of {\sf GAP} \cite{gap}. \\

 3) \; If $n=5$, then by Lemma \ref{SpC32} and Proposition \ref{Non-DS}, all groups of order $32$, except possibly $G_{50}^{32}$, are not Cay-DS. It remains to show that $G_{50}^{32}$ is not Cay-DS. The latter is done in Table \ref{T21}, by exhibiting a Cayley graph over $G_6^{32}$ which is copspectral and non-isomorphic to a Cayley graph over $G_{50}^{32}$.  Therefore all groups of order $32$ are not Cay-DS. \\
 Now since each $2$-group of order $2^n$ ($n\geq 5$) contains a subgroup of order $32$, it follows from Proposition \ref{Non-DS} that  each $2$-group of order at least $32$ is not Cay-DS.  This completes the proof. 
\end{proof}

{\noindent \bf Proof of Theorem \ref{corp} (2).} It follows from Theorem \ref{2groups}. \\
 
\begin{table}[h]
\centering
\tiny
\caption{The presentation of groups of order $16$ }\label{T01}
\begin{tabular}{|l|}\hline\\
$G_1^{16}=C_{16}=\left\langle a \;|\; a^{16}=1 \right\rangle$\\
$G_2^{16}=C_4\times C_4=\left\langle a,b|a^4=b^4=1,ab=ba\right\rangle$\\
$G_3^{16}=(C_4\times C_2):C_2=\left\langle a,b|a^4=b^2=(a^2b)^2=(ab)^4=1\right\rangle$\\
$G_4^{16}=C_4:C_4=\left\langle a,b|a^4=b^4=1, a^{-1}ba=b^{-1}\right\rangle$\\
$G_5^{16}=C_8\times C_2=\left\langle a,b|a^8=b^2=1, ab=ba\right\rangle$\\
$G_6^{16}=C_8:C_2=\left\langle a,b|a^8=b^2=1, bab=a^{5}\right\rangle$\\
$G_7^{16}=D_{16}=\left\langle a,b|a^2=b^2=1, (ab)^8=1\right\rangle$\\
$G_8^{16}=QD_{16}=\left\langle a,b|a^4=b^2=1, (a^2b)^2=1, (ab)^3=ba\right\rangle$\\
$G_9^{16}=Q_{16}=\left\langle a,b|a^4=b^4=1, a^2=b^2, (ab)^3=ba^{-1}\right\rangle$\\
$G_{10}^{16}=C_4\times C_2\times C_2=\left\langle a,b,c|a^4=b^2=c^2=1, ab=ba, ac=ca, bc=cb\right\rangle$\\
$G_{11}^{16}=C_2\times D_8=\left\langle a,b,c|a^2=b^2=c^2=1, (ac)^2=1, (bc)^2=1, (ab)^4=1\right\rangle$\\
$G_{12}^{16}=C_2\times Q_8=\left\langle a,b,c|c^2=1, a^2=b^2=b^{-2}, a^{-1}ba=b^{-1}, ac=ca, bc=cb\right\rangle$\\
$G_{13}^{16}=(C_4\times C_2):C_2=\left\langle a,b,c|a^2=b^2=c^4=1, ac=ca, bc=cb, c^2=(ba)^2\right\rangle$\\
$G_{14}^{16}=C_2\times C_2\times C_2\times C_2=\left\langle a,b,c,d|a^2=b^2=c^2=d^2=(ab)^2=(ac)^2=(bc)^2=(ad)^2=(bd)^2=(cd)^2=1\right\rangle$\\
\\\hline
\end{tabular}
\end{table}

Note that in Table \ref{T01} the group $G_2^{16}$ and $G_{13}^{16}$ have the same  ``structure description"  $(C_4\times C_2):C_2$ according to the function 
{\sf StructureDescription} of {\sf GAP} \cite{gap} but of course they are non-isomorphic. 

\begin{table}[h]
\centering
\tiny
\caption{Non-DS groups of order $16$ with their non-isomorphic cospectral mates}\label{T11}
\begin{tabular}{|l|}\hline
$\quad\quad\quad\quad\quad\quad\quad\quad\quad\quad\quad\quad\quad\quad\quad\quad \left(S_{G_1},S_{G_2}\right)$\\\hline\\
$\left(\left\{a^{\pm 1}, b, a^2, aba^{-1}b\right\}_{G_3^{16}},\left\{a, b, c, bc, c(ab)^2\right\}_{G_{11}^{16}}\right)$\\
$\left(\left\{a^{\pm 1}, b^{\pm 1}, b^2, (ab)^2\right\}_{G_2^{16}},\left\{a^{\pm 1}, b^{\pm 1}, a^2, b^2\right\}_{G_4^{16}}\right)$\\
$\left(\left\{a^{\pm 1}, b, a^4, ab,aba^6\right\}_{G_5^{16}},\left\{a, b, (ab)^4, a(ab)^6, a(ab)^4, b(ab)^4\right\}_{G_7^{16}}\right)$\\
$\left(\left\{a^{\pm 1}, b, a^4, aba^4,aba^6\right\}_{G_6^{16}},\left\{a^{\pm 1}, b, a^2, aba^2, ab(a^3b)^2\right\}_{G_8^{16}}\right)$\\
$\left(\left\{a^{\pm 1}, b^{\pm 1}, b^2, (ab)^2\right\}_{G_2^{16}},\left\{a^{\pm 1}, b, c, a^2, bc\right\}_{G_{10}^{16}}\right)$\\
$\left(\left\{a^{\pm 1}, b^{\pm 1}, b^2, (ab)^2\right\}_{G_2^{16}},\left\{a, b, c, d, ab, ad\right\}_{G_{14}^{16}}\right)$\\
$\left(\left\{a^{\pm 1}, a^{\pm 4}, a^{\pm 5}, a^8\right\}_{G_1^{16}},\left\{a^{\pm 1}, b, aba^{-1}b, baba^{-1}b, ba^2, aba^{-1}ba^2\right\}_{G_3^{16}}\right)$\\
$\left(\left\{a, b^{\pm 1}, a^3ba^3b^3, b^2, ab^2, (a^3b)^2\right\}_{G_9^{16}},\left\{a^{\pm 1}, b, aba^{-1}b, baba^{-1}b, ba^2, aba^{-1}ba^2\right\}_{G_3^{16}}\right)$\\
\\\hline
\end{tabular}
\end{table}

\begin{table}[h]
\centering
\tiny
\caption{The group $(C_2\times Q_8):C_2$ is not Cay-DS}\label{T21}
\begin{tabular}{|l|}\hline\\
$G_6^{32}=\left\langle a,b|a^4=b^2=1, (ba^{-1})^4=1, (ba^{-1}ba)^2=1, a(ba^{-2})^3ba=1\right\rangle$\\
$G_{50}^{32}=\left\langle a,b,c,d|a^2=d^2=b^4=c^4=1, b^2c^2=1, bc^{-1}b^{-1}c^{-1}=1, (ab^{-1})^2=1, acac^{-1}=1, bdb^{-1}d=1, cdc^{-1}d=1, ac^2dad=1\right\rangle$\\
$\quad\quad\quad\quad\quad\quad\quad\quad\quad\quad\quad\quad\quad\left(\left\{a^{\pm 1}, b, (ba)^2(ab)^2, ba(a^2b)^3ab\right\}_{G_6^{32}},\left\{a, b, d, bc^2, acd\right\}_{G_{50}^{32}}\right)$\\
\\\hline
\end{tabular}
\end{table}

In the following, we study Cay-DS $3$-groups. All groups of order $9$, which are isomorphic to $C_9$ or $C_3\times C_3$, are Cay-DS by Proposition \ref{at11}. 

\begin{thm}\label{3gr}
Let $G$ be a group of order $27$. Then $G$ is Cay-DS if and only if $G\cong C_{27}$.
\end{thm}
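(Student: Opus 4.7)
The plan is to split the theorem into its two implications: first, that each of the four non-cyclic groups of order $27$---namely $C_9\times C_3$, $C_3\times C_3\times C_3$, the Heisenberg group of exponent $3$, and the non-abelian group $C_9\rtimes C_3$ of exponent $9$---is not Cay-DS; and second, that $C_{27}$ itself is Cay-DS. Neither direction seems to follow from a forbidden-subgroup argument in the preceding sections, since $C_3\times C_3$ is already known to be Cay-DS by Proposition \ref{at11}, so I would proceed by direct computation in the spirit of the proof of Theorem \ref{2groups}.

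For the ``only if'' direction, I would mimic Table \ref{T11}: for each non-cyclic group $G$ of order $27$, exhibit an explicit symmetric set $S_G\subseteq G$ together with a symmetric set $S_{G'}$ in some (possibly different) group $G'$ of order $27$ such that $Cay(G,S_G)$ and $Cay(G',S_{G'})$ are cospectral but non-isomorphic, and display these four pairs in a short table. Such pairs can be located by direct enumeration with {\sf GAP} and the {\sf GRAPE} package: generate the connected Cayley graphs on each of the five groups of order $27$, compute their characteristic polynomials, and compare cospectral candidates up to graph isomorphism.

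For the ``if'' direction, I would verify computationally that $C_{27}$ is Cay-DS. Since cospectral graphs have the same number of vertices, it suffices to enumerate the connected Cayley graphs on each of the five groups of order $27$ and check that whenever a Cayley graph over $C_{27}$ has the same spectrum as a Cayley graph over any group of order $27$, the two are isomorphic. Restricting to connected Cayley graphs is justified by the Remark following Proposition \ref{Non-DS}, and the enumeration over $C_{27}$ may be reduced to orbit representatives under $Aut(C_{27})\cong C_{18}$ acting on symmetric subsets. The main obstacle is computational: each group of order $27$ has $2^{13}$ symmetric subsets of $G\setminus\{1\}$, so some orbit reduction under the respective automorphism group is needed to keep the search tractable. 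Nonetheless, in parallel with the enumeration carried out for order $16$ in the proof of Theorem \ref{2groups}, this strategy is expected to terminate in {\sf GAP} and to establish both directions of the theorem.
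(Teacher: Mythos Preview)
Your proposal is correct and follows essentially the same approach as the paper: the paper exhibits in Table~\ref{T41} explicit cospectral non-isomorphic Cayley pairs covering the four non-cyclic groups of order $27$, and then verifies by a {\sf GRAPE}/{\sf GAP} enumeration (finding $920$ non-isomorphic connected Cayley graphs on $C_{27}$) that none of them has a cospectral non-isomorphic mate among Cayley graphs on any group of order $27$. The only cosmetic difference is that the paper's table uses three pairs rather than four, since one pair already witnesses two of the groups simultaneously.
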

\begin{proof}
There are exactly $5$ non-isomorphic groups of order $27$. In Table \ref{T31},  presentations of all these groups except $C_{27}$ are given. Also, a non-isomorphic cospectral Cayley graphs for  each of the latter $4$ groups is exhibited in Table \ref{T41}. The only remaining group is $C_{27}$ for which there are $920$ non-isomorphic connected Cayley graphs. None of the latter Cayley graphs has a non-isomorphic cospectral mate among Cayley graphs over groups of order $27$. This claim is checked by the {\sf GARPE} package of {\sf GAP}. This completes the proof.
\end{proof}

{\noindent Proof of Theorem \ref{corp} (3).} 
Let $P$ be a Sylow $3$-subgroup of a finite Cay-DS groups which is not isomorphic to $C_3 \times C_3$. If $|P|<27$, then by assumption, $P$ is cyclic of order $1$, $3$ or $9$. So we may assume that $|P|\geq 27$.
Then $P$ contains a subgroup  of order $27$. Let $Q$ be any subgroup of order $27$ of $P$.
It follows from Theorem \ref{3gr} and Proposition \ref{Non-DS} that $Q$ is cyclic of order $27$. 
Since every subgroup of $P$ of order at most $27$ is contained in a subgroup of order $27$, it follows that $P$ has no subgroup isomorphic to $C_3 \times C_3$.
Now Lemma 1.4 of \cite{Ber} implies that $P$ is cyclic. $\hfill \Box$\\

\begin{table}[h]
\centering
\tiny
\caption{The presentation of groups of order $27$ }\label{T31}
\begin{tabular}{|l|}\hline\\
$G_2^{27}=C_9\times C_3=\left\langle a,b| a^9=b^3=1, ab=ba\right\rangle$\\
$G_3^{27}=(C_3\times C_3):C_3=\left\langle a,b| a^3=b^3= (ab)^3= (ab^{-1})^3=1\right\rangle$\\
$G_4^{27}=C_9:C_3=\left\langle a,b| a^9=b^3=1, b^{-1}ab=a^{-2}\right\rangle$\\
$G_5^{27}=C_3\times C_3\times C_3=\left\langle a,b,c| a^3=b^3=c^3=1, ab=ba, ac=ca, bc=cb\right\rangle$\\
\\\hline
\end{tabular}
\end{table}

\begin{table}[h]
\centering
\tiny
\caption{Non-DS groups of order $27$ with their non-isomorphic cospectral mates}\label{T41}
\begin{tabular}{|l|}\hline
$\quad\quad\quad\quad\quad\quad\quad\quad\quad\quad\quad\quad\quad\quad\quad\quad \left(S_{G_1},S_{G_2}\right)$\\\hline\\
$\left(\left\{a^{\pm 1}, b^{\pm 1}, abab^{-1}a^{-1}, a^2(bab^{-1}a^{-1})^2\right\}_{G_3^{27}},\left\{a^{\pm 1}, b^{\pm 1}, ab^2, a^2ba^3\right\}_{G_4^{27}}\right)$\\
$\left(\left\{a^{\pm 1}, b^{\pm 1}, b^2ab^{-1}a^{-1}, b^2(bab^{-1}a^{-1})^2\right\}_{G_3^{27}},\left\{a, b, c, a^2, b^2, c^2\right\}_{G_5^{27}}\right)$\\
$\left(\left\{a^{\pm 1}, b^{\pm 1}, a^{\pm 3}, ab^2, a^2ba^6, b^2a^3, ba^6\right\}_{G_2^{27}},\left\{a^{\pm 1}, b^{\pm 1}, a^{\pm 3}, ba^3, a^2b^2, aba^6, b^2a^6\right\}_{G_2^{27}}\right)$\\
\\\hline
\end{tabular}
\end{table}

\section{\bf Cay-DS dihedral Groups}
In this section we prove Theorem \ref{dihedral}, that is we determine exactly when the dihedral group $D_{2p}$ is Cay-DS for any prime number $p$. \\

{\noindent \bf Proof of Theorem \ref{dihedral}.}
By Proposition \ref{at11}, the dihedral group $D_{2p}$ for $p=2,3$ and $5$ is DS. Also, by {\sf GAP} \cite{gap} it is checked  that $D_{2p}$ is Cay-DS for  $p=7$ and $p=11$. Now for each prime number $p\geq 13$, we construct a pair of non-isomorphic cospectral Cayley graphs of degree $6$ over $D_{2p}$. Suppose $a$ and $b$ denote two arbitrary but from now on fixed elements of $D_{2p}$ of orders $2$ and $p$ respectively. Let $S=\{a,ab,ab^2,ab^6,ab^8,ab^{11}\}$ and $T=\{a,ab^2,ab^4,ab^5,ab^{10},ab^{11}\}$. Note that  all elements of subsets $S$ and $T$ are involutions. For a set $X$ of integers, let $\beta_X (c)$, $0 \leq c \leq p-1$, denotes the total number of solutions $(x,y)\in X\times X$ of equation $$x-y\equiv c \left(\text{mod} \; p\right).$$
It is easy to see that for each integer number $c$, $0\leq c \leq p-1$, $\beta_S (c)=\beta_T (c)$. It follows from  Corollary 4.2 of \cite{Ba}, two Cayley graphs $Cay\left(D_{2p},S\right)$ and $Cay\left(D_{2p},T\right)$ are cospectral. Now we show that these two Cayley graphs are not isomorphic. Since $D_{2p}$ is a CI-group \ref{D2p},  it is sufficient to show that there is no automorphism $\sigma \in Aut(D_{2p})$ such that $T^\sigma=S$. An arbitrary automorphism  of $D_{2p}$ has the form as follows 
\begin{equation*}
\sigma : \left\{
\begin{array}{lr}
a\mapsto ab^l &  0\leq l \leq p-1\\
b\mapsto b^s & 1\leq s \leq p-1
\end{array} \right.
\end{equation*}
Therefore, we must show that two sets $S$ and $T^\sigma =\{l,l+2s,l+4s,l+5s,l+10s,l+11s\}$ are not equal (in mod $p$), for any values of $0\leq l \leq p-1$ and $1\leq s \leq p-1$. Suppose, for a contradiction, that $S=T^\sigma$, then there are suitable $l$ and $s$ such that we have
$$\{l+a_1s=0,l+a_2s=1,l+a_3s=2,l+a_4s=6,l+a_5s=8,l+a_6s=11\},$$
where, $a_i\neq a_j, i\neq j$ and $a_i \in \{0,2,4,5,10,11\}, 1\leq i \leq 6.$

Since exactly one of $a_i$ is zero, there exist  at most six possible values for the parameter $l$ which are $0,1,2,6,8$ or $11$. Now we investigate each possible case for $l$ and show that in each case we get a contradiction. 
\begin{itemize}
\item[case 1:] $a_1=0$\\
Then $l=0$ and $a_5-a_4=a_3$, which  is impossible.
\item[case 2:] $a_2=0$\\
It follows that $l=1$ and $a_5-a_4=2a_3$, a contradiction.
\item[case 3:] $a_3=0$\\
Thus $l=2$ and $a_4-a_5=2a_2$, a contradiction.
\item[case 4:] $a_4=0$\\
Then $l=6$ and $a_2-a_1=a_3$, a contradiction.
\item[case 5:] $a_5=0$\\
So $l=8$ and $a_1-a_3=a_4$, which is impossible. 
\item[case 6:] $a_6=0$\\
This case is a little challenging. It follows that $l=11$ and $5a_5=3a_4$.  Thus we must determine  for which primes $p$ the latter equalities are valid. We have to examine all cases where $a_4\in \{2,4,5,10,11\}$ and $a_5 \in \{2,4,5,10,11\}$ such that $a_4\neq a_5$. Checking by hand and doing some simple calculations (noting that  $p\geq 13$) we find that if $p\notin \{13,17,19,23,43\}$, then there is no solution for the equation $5a_5=3a_4$.
\end{itemize}
Therefore, to complete the proof, we must only check the equality $S=T^\sigma$ for the  primes $p\in \{13,17,19,23,43\}$. It is done by simple calculations in {\sf GAP} \cite{gap} and the latter equality is not possible for the primes $p\in \{13,17,19,23,43\}$.  This completes the proof. $\hfill \Box$\\

\begin{cor}\label{cordihed}
Let $D_{2n}$ be the dihedral group of order $2n$. If $D_{2n}$ is Cay-DS, then $n\in \{1,2,3,4,5,7,9\}$ or $n=11^k$ for some integer $k\geq 1$. If $n\in\{1,2,3,4,5,7,9,11\}$ then $D_{2n}$ is Cay-DS.
\end{cor}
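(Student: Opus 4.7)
The plan is to combine subgroup closure (Proposition \ref{Non-DS}) with Theorems \ref{dihedral} and \ref{2groups} to reduce the question to finitely many small cases, then verify those by direct computation in the style of Tables \ref{T11}, \ref{T41} and \ref{T21}.

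For the sufficient direction, when $n\in\{1,2,3,4,5\}$ we have $|D_{2n}|\leq 10$, so $D_{2n}$ is Cay-DS by Proposition \ref{at11}; for $n\in\{7,11\}$ the claim is immediate from Theorem \ref{dihedral}; and for $n=9$ one verifies in {\sf GAP} that no two non-isomorphic Cayley graphs on groups of order $18$ are cospectral when one of them lies on $D_{18}$, in the same manner that the Cay-DS assertion about $G_{12}^{16}$ and $G_{13}^{16}$ is discharged in the proof of Theorem \ref{2groups}.

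For the necessary direction, assume $D_{2n}$ is Cay-DS. Since $D_{2d}\leq D_{2n}$ for every divisor $d\mid n$, Proposition \ref{Non-DS} together with Theorem \ref{dihedral} forces every prime factor of $n$ to lie in $\{2,3,5,7,11\}$, while Theorem \ref{2groups} (and the observation that $D_{16}=G_7^{16}$ is not Cay-DS) forces $8\nmid n$. Writing $n=2^a\cdot 3^b\cdot 5^c\cdot 7^d\cdot 11^e$ with $a\in\{0,1,2\}$, the heart of the argument is to verify computationally that $D_{2m}$ fails to be Cay-DS for each
\[
m\in \{6,\,10,\,14,\,15,\,21,\,22,\,25,\,27,\,33,\,35,\,49,\,55,\,77\}.
\]
Given this, a short case analysis closes the proof: $6,10,14,22\nmid n$ forces $a=0$ as soon as any of $b,c,d,e$ is positive; $15,21,33,35,55,77\nmid n$ forces at most one of $b,c,d,e$ to be positive; and $25,27,49\nmid n$ cap the nonzero exponent at $1,2,1$ respectively while $e$ is unrestricted. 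This produces exactly $n\in\{1,2,3,4,5,7,9\}\cup\{11^k : k\geq 1\}$.

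The main obstacle is the computational verification of the thirteen base cases, each of which lives in a group of order at most $154$. The natural approach, mirroring the construction in the proof of Theorem \ref{dihedral}, is to exhibit for each $m$ two symmetric subsets $S,T$ of involutions in $D_{2m}$ whose multisets of pairwise differences modulo $m$ coincide, apply Corollary 4.2 of \cite{Ba} to obtain cospectrality, and then certify non-isomorphism by running through $Aut(D_{2m})$ in {\sf GAP}. A uniform theoretical construction that handles all thirteen values simultaneously would be the most satisfying way to remove this computational residue.
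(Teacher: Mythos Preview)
Your proposal is correct and follows essentially the same route as the paper: both directions are handled identically, and the core of the necessary direction is a finite list of computationally verified non-Cay-DS base cases together with subgroup closure. The paper records precisely the same list in Table~\ref{T61} (with $m=8$ included there explicitly rather than deduced from Theorem~\ref{2groups} as you do), and your case analysis spelling out how the divisibility constraints force $n\in\{1,2,3,4,5,7,9\}\cup\{11^k\}$ is a welcome clarification of what the paper leaves implicit.
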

\begin{proof}
It follows from Proposition \ref{at11}  $D_{2n}$ is Cay-DS for every $n\in \{1,2,3,4,5\}$. Also by computation it is verified that for $n\in \{7,9,11\}$ the group $D_{2n}$ are Cay-DS. Now suppose that $D_{2n}$ is Cay-DS. By Theorem \ref{dihedral}, the factorization of $n$ must be of the form $n=2^m3^q5^r7^s11^k$, for some  integers $m,q,r,s$ and $k$. By the results in the Table \ref{T61}, we have $n=11^k$, which completes our proof.
\begin{table}[h]
\centering
\tiny
\caption{Some special dihedral groups which are not Cay-DS}\label{T61}
\begin{tabular}{|l|}\hline\\
$D_{12}=\left\langle a,b| a^2=b^6=1, (ab^{-1})^2=1\right\rangle$\\
$\quad\left(\left\{a,b^{\pm 1}, ab\right\},\left\{a, ab, b^3, ab^3\right\}\right)$\\
$D_{16}=\left\langle a,b| a^2=b^8=1, (ab^{-1})^2=1\right\rangle$\\
$\quad\left(\left\{a, b^{\pm 3}, ab, ab^2, ab^5\right\},\left\{ab, b^{\pm 3}, ab^2, ab^4, ab^5\right\}\right)$\\
$D_{20}=\left\langle a,b| a^2=b^{10}=1, (ab^{-1})^2=1\right\rangle$\\
$\quad\left(\left\{a, ab, ab^2, ab^3, b^5, ab^6\right\},\left\{a, ab, ab^2, ab^4, b^5, ab^5\right\}\right)$\\
$D_{28}=\left\langle a,b| a^2=b^{14}=1, (ab^{-1})^2=1\right\rangle$\\
$\quad\left(\left\{a, ab, ab^2, ab^3, ab^4, ab^8\right\},\left\{a, ab, ab^2, ab^3, ab^5, ab^7\right\}\right)$\\
$D_{30}=\left\langle a,b| a^2=b^{15}=1, (ab^{-1})^2=1\right\rangle$\\
$\quad\left(\left\{a, ab, ab^2, ab^3, ab^6, ab^7\right\},\left\{a, ab, ab^2, ab^3, ab^5, ab^8\right\}\right)$\\
$D_{42}=\left\langle a,b| a^2=b^{21}=1, (ab^{-1})^2=1\right\rangle$\\
$\quad\left(\left\{a, ab, ab^2, ab^3, ab^7, ab^{15}\right\},\left\{a, ab, ab^3, ab^4, ab^{10}, ab^{14}\right\}\right)$\\
$D_{44}=\left\langle a,b| a^2=b^{22}=1, (ab^{-1})^2=1\right\rangle$\\
$\quad\left(\left\{a, ab, ab^2, ab^3, ab^9, ab^{13}\right\},\left\{a, ab, ab^2, ab^4, ab^{10}, ab^{11}\right\}\right)$\\
$D_{50}=\left\langle a,b| a^2=b^{25}=1, (ab^{-1})^2=1\right\rangle$\\
$\quad\left(\left\{a, ab, ab^2, ab^5, ab^{11}, ab^{15}\right\},\left\{a, ab, ab^5, ab^6, ab^{10}, ab^{17}\right\}\right)$\\
$D_{54}=\left\langle a,b| a^2=b^{27}=1, (ab^{-1})^2=1\right\rangle$\\
$\quad\left(\left\{a, ab, ab^2, ab^3, ab^9, ab^{19}\right\},\left\{a, ab, ab^3, ab^9, ab^{10}, ab^{11}\right\}\right)$\\
$D_{66}=\left\langle a,b| a^2=b^{33}=1, (ab^{-1})^2=1\right\rangle$\\
$\quad\left(\left\{a, ab, ab^2, ab^3, ab^{12}, ab^{22}\right\},\left\{a, ab, ab^3, ab^{11}, ab^{23}, ab^{24}\right\}\right)$\\
$D_{70}=\left\langle a,b| a^2=b^{35}=1, (ab^{-1})^2=1\right\rangle$\\
$\quad\left(\left\{a, ab, ab^2, ab^5, ab^{16}, ab^{23}\right\},\left\{a, ab, ab^5, ab^8, ab^{14}, ab^{26}\right\}\right)$\\
$D_{98}=\left\langle a,b| a^2=b^{49}=1, (ab^{-1})^2=1\right\rangle$\\
$\quad\left(\left\{a, ab, ab^7, ab^9, ab^{11}, ab^{14}\right\},\left\{a, ab, ab^7, ab^{15}, ab^{33}, ab^{40}\right\}\right)$\\
$D_{110}=\left\langle a,b| a^2=b^{55}=1, (ab^{-1})^2=1\right\rangle$\\
$\quad\left(\left\{a, ab, ab^2, ab^5, ab^{26}, ab^{33}\right\},\left\{a, ab, ab^5, ab^{31}, ab^{32}, ab^{53}\right\}\right)$\\
$D_{154}=\left\langle a,b| a^2=b^{77}=1, (ab^{-1})^2=1\right\rangle$\\
$\quad\left(\left\{a, ab, ab^7, ab^9, ab^{11}, ab^{14}\right\},\left\{a, ab, ab^3, ab^8, ab^{10}, ab^{14}\right\}\right)$\\
\\\hline
\end{tabular}
\end{table}
\end{proof}

We do not know whether  $D_{2\cdot 11^2}=D_{242}$ is not Cay-DS. If it is the case, we completely determine positive integers $n$ for which  $D_{2n}$ is Cay-DS. 

\section{\bf Finite DS groups are solvable}

In the next lemma, we determine the Cay-DS property of some groups which are needed to prove the next theorem.

\begin{lem}\label{table}
The groups $\left( C_2\times C_2\times C_2\right):C_7=\left\langle a,b| a^7=b^2=(aba^{-1}b)^2= aba^{-3}(ba)^2=1\right\rangle$ and $A_5$ are not Cay-DS.
\end{lem}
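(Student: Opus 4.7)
The plan is, for each of the two groups, to exhibit an explicit pair of non-isomorphic cospectral Cayley graphs, in the same style as the tables already displayed in Sections 4 and 5 (Tables \ref{T11}, \ref{T21}, \ref{T41}, \ref{T61}). The structural reductions available so far do not suffice to shortcut this: by Proposition \ref{Non-DS} one would need a proper subgroup already known to be non-Cay-DS, but the proper subgroups of the Frobenius group $(C_2 \times C_2 \times C_2):C_7$ are only $\{1\}$, subgroups of $C_2^3$, and copies of $C_7$ (because $C_7$ acts irreducibly on $C_2^3$), all of which are Cay-DS by Proposition \ref{at11}; and the proper subgroups of $A_5$, namely subgroups of orders in $\{1,2,3,4,5,6,10,12\}$ (cyclic groups, $C_2 \times C_2$, $S_3=D_6$, $D_{10}$, and $A_4$), are likewise not among the non-Cay-DS groups identified so far. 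Theorem \ref{Qu} is also of no help, since the only nontrivial proper quotient of $(C_2 \times C_2 \times C_2):C_7$ is $C_7$ and $A_5$ is simple. A direct construction is therefore required.

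The construction would proceed by computer search using {\sf GAP} together with its {\sf GRAPE} package, exactly in the spirit of Tables \ref{T21} and \ref{T41}. For each target group I would enumerate connected Cayley graphs of some small fixed regularity (say degree $5$ up to $8$), compute their characteristic polynomials, and look for coincidences of spectrum among graphs that are separated by the built-in isomorphism test of {\sf GRAPE}. The enumeration can be pruned considerably by reducing candidate symmetric connection sets modulo the action of $\mathrm{Aut}(G)$ on $G\setminus\{1\}$. Note that the cospectral mate need not live on $G$ itself: any Cayley graph on any group of order $56$ (respectively, $60$) that is cospectral and non-isomorphic to some Cayley graph on the target group is already enough to witness non-Cay-DS, which widens the search space and makes it more likely to succeed at low regularity.

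Once a witness pair is found, it is recorded in the tabular format $\left(S_{G_1}, S_{G_2}\right)$ used in the previous sections, together with presentations of the two underlying groups. The main obstacle is purely computational, since the number of candidate symmetric subsets grows very fast with the degree; the key practical step is to restrict to small regularity and quotient by $\mathrm{Aut}(G)$ so that the search stays well within reach of the {\sf GAP}/{\sf GRAPE} machinery repeatedly relied upon in the preceding sections. No new theoretical ingredient beyond this computational framework is needed.
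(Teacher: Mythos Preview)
Your proposal is correct and is essentially the paper's own approach: the paper simply exhibits, via a {\sf GAP} computation recorded in a table, explicit pairs of symmetric subsets $(S_{G},S'_{G})$ of $G_{11}^{56}$ and of $A_5$ whose Cayley graphs are cospectral but non-isomorphic. The only minor difference is that the paper's witnesses happen to live on the same group in both cases (degree~$6$ for $G_{11}^{56}$, degree~$7$ for $A_5$), so your broader search over all groups of order $56$ or $60$ is unnecessary in practice.
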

\begin{proof}
In the Table \ref{T51}, we give the presentations of the above groups and two inverse close subsets of these groups which the Cayley graphs derived from them are not isomorphic but are cospectral. The computation done by {\sf GAP} \cite{gap}.
\begin{table}[h]
\centering
\tiny
\caption{Some special groups which are not Cay-DS}\label{T51}
\begin{tabular}{|l|}\hline\\
$G_{11}^{56}=\left\langle a,b| a^7=b^2=(aba^{-1}b)^2=aba^{-3}(ba)^2=1\right\rangle$\\
$\quad\left(\left\{b, a^{\pm 2}, aba^{-1}, a^3(ba^{-1})^2b^{-1}a, a^4b^{-1}a\right\}_{G_{11}^{56}},\left\{b, a^{\pm 3}, aba^{-1}, a^4ba^{-1}b^{-1}a, a^2b^{-1}a\right\}_{G_{11}^{56}}\right)$\\
$A_5$\\
$\quad(\left\{(2,3)(4,5), (2,4)(3,5), (2,5)(3,4), (1,2)(4,5), (1,2)(3,4), (1,3)(4,5), (1,4)(3,5)\right\},$\\
$\; \; \; \; \;\left\{(2,3)(4,5), (2,4)(3,5), (2,5)(3,4), (1,2)(4,5), (1,2)(3,4), (1,3)(4,5), (1,4)(2,3)\right\})$\\
\\\hline
\end{tabular}
\end{table}
\end{proof}

\noindent{\bf Proof of Theorem \ref{thm:minimal}.}
Suppose, for a contradiction, that $G$ is a finite non-solvable DS group of minimum order.  Since every subgroup and quotient of a DS group is DS by Proposition \ref{Non-DS} and Theorem \ref{Qu}) and the class of solvable groups is closed under extensions and subgroups, $G$ is a finite non-abelian simple group. 
By Theorem 1 of \cite{BW}, $G$ contains a minimal simple group $H$. By Thompson's classification of  minimal simple groups \cite{Thompson}, $H$ is isomorphic to one the following groups: 
\begin{itemize}
\item[1)] $\text{PSL}_2\left( 2^p\right)$, $p$ a prime,
\item[2)] $\text{PSL}_2\left( 3^p\right)$, $p$ an odd prime,
\item[3)] $\text{PSL}_2\left( p\right)$, $p> 3$ a prime congruent to $2$ or $3$ mod $5$,
\item[4)] $\text{Sz}\left( 2^p\right)$, $p$ an odd prime,
\item[5)] $\text{PSL}_3\left( 3\right)$.
\end{itemize}
Now we prove that none of the above groups are DS.
The group $\text{PSL}_3\left( 3\right)$ is not DS: for it can be easily seen by {\sf GAP} \cite{gap} that  $\text{PSL}_3(3)$ has a subgroup isomorphic to  $D_{12}$, which is not DS by Corollary \ref{cordihed}.\\

The group $\text{PSL}_2\left( p\right)$, $p> 3$ a prime congruent to $2$ or $3$ mod $5$, has the dihedral groups $D_{p-1}$ and $D_{p+1}$ as its subgroups \cite[HauptSatz 8.27, p. 213]{Hup1}. By Theorem \ref{dihedral}, we have $p-1=2^{t_1}3^{t_2}7^{t_3}11^{t_4}$ and $p+1=2^{s_1}3^{s_2}7^{s_3}11^{s_4}$, since $p$ is congruent to $2$ or $3$ mod $5$. By Corollary \ref{cordihed}, the groups $D_{2^2\times 11}$, $D_{2\times 3\times 7}$, $D_{2\times 3\times 11}$, $D_{2\times 7^2}$ and $D_{2\times 7\times 11}$ are not Cay-DS. So, the only possible cases for the factorization of $p-1$ and $p+1$ are $2\times 3$, $2\times 7$, $2\times 11$ and $2\times 11^k$ for some integer $k$. All these cases are impossible, since $p$ is congruent to $2$ or $3$ mod $5$.\\

The center of any Sylow $2$-subgroup of a Suzuki group $\text{Sz}\left( 2^p\right)$, $p$ an odd prime, is elementary abelian of order $2^p$ \cite[p. 182]{Hup2}. It follows from Theorem \ref{2groups} that  $p=3$.  But using {\sf GAP} it is easy to see that $\text{Sz}\left(8\right)$ has the group $D_{26}$ as a subgroup and by Theorem \ref{dihedral}, it is not Cay-DS.\\

The Sylow $2$-subgroup of $\text{PSL}_2\left( 2^p\right)$, $p$ a prime, is an elementary abelian group of order $2^p$ \cite[HauptSatz 8.27, p. 213]{Hup1}. By Theorem \ref{2groups} we have $p=2$ or $p=3$. If $p=2$, then $\text{PSL}_2\left( 2^2\right)$ is isomorphic to $A_5$, the Alternating group of order $60$. If $p=3$, then $\text{PSL}_2\left( 2^3\right)$ has the subgroup $\left( C_2\times C_2\times C_2\right):C_7$. By Lemma \ref{table}, these two latter groups are not DS.\\

The Sylow $2$-subgroup of $\text{PSL}_2\left( 3^p\right)$, $p$ an odd prime, is  an elementary abelian group of order $3^p$ \cite[HauptSatz 8.27, p. 213]{Hup1} which is not a DS by Theorem \ref{3gr} as $3^p\geq 27$. $\hfill\Box$\\

The following result shows that the structure of finite DS groups is somehow restricted. 
\begin{thm}
Let $G$ be a finite DS group of odd order. If   Sylow $3$-subgroups of $G$ are not isomorphic to $C_3 \times C_3$,   then $G$ has the following presentation
$$G=\langle a,b|a^m=1=b^n, b^{-1}ab=a^r\rangle$$
where $r^n\equiv 1 \left(\text{mod}\; m\right)$, $m$ is odd, $0\leq r\leq m$, and $m$ and $n\left( r-1\right)$ are coprime.
\end{thm}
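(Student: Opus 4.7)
\emph{Proof proposal.} The plan is to reduce the statement to the classical structure theorem for finite groups whose Sylow subgroups are all cyclic (the so-called Z-groups), which is due to Hölder, Burnside and Zassenhaus. The bulk of the work consists in checking that the hypotheses of this structure theorem are met, and this is essentially done by Theorem \ref{corp} together with the extra assumption on the Sylow $3$-subgroups.

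First I would verify that every Sylow subgroup of $G$ is cyclic. Since $|G|$ is odd, the prime $2$ contributes nothing. Note that a DS group is automatically Cay-DS (the latter is a strictly weaker property, as it only rules out Cayley cospectral mates rather than arbitrary ones). Hence for every prime $p > 5$ dividing $|G|$, Theorem \ref{corp}(1) gives that the Sylow $p$-subgroup is cyclic; for $p = 5$, the second clause of the same theorem applies because $G$ is assumed to be DS. For the remaining prime $p = 3$, Theorem \ref{corp}(3) tells us that the Sylow $3$-subgroup is either cyclic or isomorphic to $C_3 \times C_3$, and the hypothesis of the present theorem excludes the second case. Thus $G$ is a Z-group.

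Next, I would invoke the classical Hölder–Burnside–Zassenhaus theorem: a finite group all of whose Sylow subgroups are cyclic admits a presentation
$$G = \langle a, b \mid a^m = 1 = b^n,\ b^{-1}ab = a^r \rangle$$
with $r^n \equiv 1 \pmod m$, $\gcd(m,n) = 1$ and $\gcd(m, r-1) = 1$. The two coprimality conditions combine into the single condition $\gcd\bigl(m,\, n(r-1)\bigr) = 1$ appearing in the statement. After replacing $r$ by its residue modulo $m$ we may also assume $0 \le r \le m$. Finally, because $|G| = mn$ is odd, $m$ is odd, as required.

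There is no genuine obstacle in this argument beyond bookkeeping: Theorem \ref{corp} and the hypothesis do the entire arithmetic job of establishing cyclicity at each prime, and the Hölder–Burnside–Zassenhaus theorem then supplies the presentation directly. The only mildly subtle point is using DS (not merely Cay-DS) to handle the prime $5$, since Theorem \ref{corp}(1) distinguishes these two cases.
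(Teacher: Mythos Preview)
Your proof is correct and follows exactly the same approach as the paper: use Theorem~\ref{corp} (together with the odd-order hypothesis and the assumption on the Sylow $3$-subgroup) to conclude that all Sylow subgroups of $G$ are cyclic, then invoke the classical structure theorem for Z-groups (the paper cites it as Theorem~10.1.10 of Robinson). Your write-up is in fact more careful than the paper's, which compresses the Sylow analysis into a single sentence; in particular you correctly isolate the point that the full DS hypothesis (rather than Cay-DS) is what handles the prime $5$.
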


\begin{proof}
By Theorem   \ref{corp}, all  Sylow subgroups are cyclic. Now it follows from Theorem 10.1.10 of \cite{robinson} that
 the group $G$ has a presentation as stated in the theorem. This completes the proof.
\end{proof}

\section*{\bf Acknowledgements}
The research of the first author was in part supported by a grant (No. 93050219) from School of Mathematics, Institute
for Research in Fundamental Sciences (IPM). The research of the first author  is financially supported by the Center of
Excellence for Mathematics, University of Isfahan.


\vspace{1cc}

\noindent Alireza Abdollahi \\
Department of Mathematics,\\
University of Isfahan,\\
Isfahan 81746-73441\\
Iran;\\
and \\
School of Mathematics,\\ Institute for Research in Fundamental Sciences (IPM), \\ P.O. Box 19395-5746, Tehran,\\ Iran \\
E-mail: a.abdollahi@math.ui.ac.ir
\\
\\
\noindent Shahrooz Janbaz \\
Department of Mathematics,\\*
University of Isfahan,\\*
Isfahan 81746-73441\\*
Iran\\*
E-mail: shahrooz.janbaz@sci.ui.ac.ir
\\
\\
\noindent Mojtaba Jazaeri \\
Faculty of Mathematics and Computer Sciences,\\
Shahid Chamran University of Ahvaz,\\
Iran\\
E-mail: seja81@gmail.com
\end{document}